\long\def\symbolfootnote[#1]#2{\begingroup%
\def\thefootnote{\fnsymbol{footnote}}\footnote[#1]{#2}\endgroup}
\def\imod#1{\allowbreak\mkern10mu({\operator@font mod}\,\,#1)}
\newtheorem{theorem}{Theorem}[section]
\newtheorem{lemma}[theorem]{Lemma}
\newtheorem{corollary}[theorem]{Corollary}
\theoremstyle{definition}
\newtheorem{remark}[theorem]{Remark}
\newtheorem{example}[theorem]{Example}
\numberwithin{equation}{section}
\begin{document}

\setcounter{section}{0}
\setcounter{tocdepth}{1}
\title[Twisted Conjugacy in Linear Algebraic Groups II]{Twisted Conjugacy in Linear Algebraic Groups II}
\dedicatory{A tribute to James Edward Humphreys (1939 - 2020).}
\author[Sushil Bhunia]{Sushil Bhunia}
\author[Anirban Bose]{Anirban Bose}
\thanks{Bose is supported by DST-INSPIRE Faculty fellowship(IFA DST/INSPIRE/04/2016/001846)}
\address{Indian Institute of Science Education and Research (IISER) Mohali, Knowledge City,  Sector 81, S.A.S. Nagar 140306, Punjab, India}
\email{sushilbhunia@gmail.com}
\address{SRM University, A.P. Neerukonda, Mangalagiri Mandal
Guntur District, Mangalagiri, Andhra Pradesh 522240}
\email{anirban.math@gmail.com}
\subjclass[2010]{Primary 20G07, 20E36}
\keywords{twisted conjugacy, algebraic groups.}
\date{\today}

\begin{abstract}
Let $G$ be a linear algebraic group over an algebraically closed field $k$ and $\mathrm{Aut}_{\mathrm{alg}}(G)$ the group of all algebraic group automorphisms of $G$. For every $\varphi\in \mathrm{Aut}_{\mathrm{alg}}(G)$ let $\mathcal{R}(\varphi)$ denote the set of all orbits of the $\varphi$-twisted conjugacy action of $G$ on itself (given by $(g,x)\mapsto gx\varphi(g^{-1})$, for all $g,x\in G$). We say that $G$ has the algebraic $R_\infty$-property if $\mathcal{R}(\varphi)$ is infinite for every $\varphi\in \mathrm{Aut}_{\mathrm{alg}}(G)$. In \cite{bb} we have shown that this property is satisfied by every connected non-solvable algebraic group. From a theorem due to Steinberg it follows that if a connected algebraic group $G$ has the algebraic $R_\infty$-property, then $G^\varphi$ (the fixed-point subgroup of $G$ under $\varphi$) is infinite for all $\varphi\in \mathrm{Aut}_{\mathrm{alg}}(G)$. In this article we show that the condition is also sufficient. We also show that a Borel subgroup of any semisimple algebraic group has the algebraic $R_\infty$-property and identify certain classes of solvable algebraic groups for which the property fails.
\end{abstract}
\maketitle

\section*{Introduction}
Let $G$ be a group and $\varphi$ an automorphism of $G$. The \emph{$\varphi$-twisted conjugacy} action of $G$ on itself is defined as the map $G\times G\rightarrow G$ given by $(g,x)\mapsto gx\varphi(g^{-1})$, for all $g,x\in G$. Let $\mathcal{R}(\varphi)$  be the set of all orbits of this action and $R(\varphi)$ the cardinality of $\mathcal{R}(\varphi)$. An orbit $[x]_\varphi$ ($x\in G$) under the twisted action is also called the \emph{Reidemeister class} of $x$. The reason for this nomenclature is probably because the study of such actions can be traced back to the Nielsen-Reidemeister fixed point theory (c.f. \cite{jiang}). In what follows, $R(\varphi)=\infty$ (respectively, $R(\varphi)<\infty$) will mean that the set $\mathcal{R}(\varphi)$ is infinite (respectively, finite).

A group $G$ is said to have the \emph{$R_\infty$-property} if $R(\varphi)=\infty$ for every automorphism  $\varphi$ of $G$. The study of groups with this property has its origin in \cite{fh94}. The reader may refer to \cite{ft} for an overview and more literature.  Some recent works in this direction include \cite{bdr}, \cite{ms2020}, \cite{timur19}, and  \cite{gsw21}, where the $R_\infty$-property has been studied for twisted Chevalley groups, for the general and special linear groups over certain subrings of  $\overline{\mathbb{F}_p}(t)$, for unitriangular groups over an integral domain, and for fundamental groups of geometric $3$-manifolds, respectively. In the realm of linear algebraic groups, an early instance of considering the notion of twisted conjugacy appears in \cite{gant}. The reader is urged to look at \cite{St}, \cite{mohr03}, \cite{mw04}, and \cite{springer} for a host of interesting results. 

A linear algebraic group $G$ over an algebraically closed field, is said to have the \emph{algebraic $R_\infty$-property} if $R(\varphi)=\infty$ for every algebraic group automorphism $\varphi$ of $G$. In the sequel an algebraic group will always mean a linear algebraic group over an algebraically closed field, an automorphism $\varphi$ of an algebraic group $G$, will mean an abstract automorphism such that $\varphi$ and $\varphi^{-1}$ are morphisms of the underlying affine variety of the group, and the group of all such automorphisms will be denoted by $\mathrm{Aut}_{\mathrm{alg}}(G)$. In a previous paper \cite[Corollary 18]{bb} it has been shown that if $G$ is an algebraic group such that its connected component $G^\circ$ is non-solvable, then $G$ has the algebraic $R_\infty$-property. The aim of the present paper is to study this property for solvable algebraic groups.

In Section \ref{sg} we show that if $G$ is a connected solvable algebraic group which admits an automorphism $\varphi$ such that $R(\varphi)<\infty$, then the $\varphi$-twisted action is necessarily transitive (Theorem \ref{solv}). From a theorem due to Steinberg \cite[Theorem 10.1]{St} it follows that if a connected algebraic group $G$ has the algebraic $R_\infty$-property, then the fixed-point subgroup $G^\varphi$ is infinite for every automorphism $\varphi$ of $G$. We deduce that the condition is also sufficient (Theorem \ref{iff}). We also prove that if $G$ is a Borel subgroup of a semisimple algebraic group, then it has the algebraic $R_\infty$-property (Theorem \ref{borel}).

A  unipotent algebraic group of Chevalley type is defined as the unipotent radical of a Borel subgroup of a simple algebraic group (equivalently, a maximal connected unipotent subgroup of a simple algebraic group). Let $G$ be such a group and assume that the characteristic of the base field is different from $2$ and $3$. From the works of Fauntleroy \cite{faunt} and Gibbs \cite{gibbs} one obtains a description of all  automorphisms of $G$. We derive a necessary and sufficient condition for an automorphism $\varphi$ of $G$, for which  $R(\varphi)=1$ (Theorem \ref{maxunipotent}).

In Section \ref{ex} we compute $R(\varphi)$ for certain automorphisms $\varphi$ of some solvable algebraic groups. We observe that tori, groups of the form $\mathbb{G}_a^n$ and the $n$-dimensional Witt groups fail to have the algebraic
$R_\infty$-property for all $n\geq 1$. A connected nilpotent algebraic group has the algebraic $R_\infty$-property if and only if its unipotent radical has this property. Example \eqref{ex3} describes two distinct semidirect products of  $\mathbb{G}_m^n$ ($n\geq 1$) and $\mathbb{G}_a^r$ ($r\geq 2$) such that one of them has the algebraic $R_\infty$-property, while the other does not.

It has been shown in \cite{timurjaa, nas20} that if $k$ is an algebraically closed field of infinite transcendence degree over $\mathbb{Q}$, and $G$ is one of the groups $\mathrm{GL}_n(k)$, $\mathrm{SO}_n(k)$ or $\mathrm{Sp}_n(k)$, then there exists an abstract automorphism $\varphi$ of $G$ (induced by a non-trivial automorphism of $k$) such that $R(\varphi)=1$. The proof of this result has been carried out on a case by case basis. Therefore it is desirable to have an argument which may possibly work for any reductive algebraic group. This consideration forms a part of our ongoing work. However, it turns out that the proof of \cite[Theorem 6]{timurjaa} can be modified to show that if $k$ is an algebraically closed field of countable transcendence degree over $\mathbb{Q}$, then a Borel subgroup of any simple algebraic group over $k$, admits an abstract automorphism $\varphi$ such that $R(\varphi)=1$ (Theorem \ref{abstbor}).

\section{Preliminaries}\label{prel}
In this section we fix some notations and terminologies which will be used throughout the paper. Fix an algebraically closed field $k$. By an \emph{algebraic group} (over $k$) we mean a Zariski-closed subgroup of $\mathrm{GL}_n(k)$, for some $n\geq 1$. If $G$ is such a group, then its irreducible (equivalently, connected) component $G^\circ$ containing the identity is a closed normal subgroup of finite index in $G$; we say that $G$ is connected if $G=G^\circ$. An algebraic group $G$ is said to be \emph{solvable} if $\mathcal{D}^n(G)=e$ for some $n\geq 0$, where $\mathcal{D}^0(G):=G$ and $\mathcal{D}^{i+1}(G):=[\mathcal{D}^i(G),\mathcal{D}^i(G)]$, for all $i\geq 0$. For any connected solvable algebraic group $G$, there exist subgroups $T$ and $U$ such that $G=T\ltimes U$, where $T$ is a maximal torus and $U$ is the subgroup of all unipotent elements of $G$.

Now let $G$ be any connected algebraic group. The \emph{solvable radical} $R_s(G)$ is defined as the largest connected normal solvable subgroup of $G$ and the \emph{unipotent radical} $R_u(G)$ is defined as the largest connected normal unipotent subgroup of $G$. We say that $G$ is \emph{semisimple} (respectively, \emph{reductive}) if $R_s(G)=e$ (respectively, $R_u(G)=e$). A \emph{Borel subgroup} of $G$ is defined as a maximal closed connected solvable subgroup of $G$. We say that $G$ is \emph{simple} if it is not commutative and does not contain a non-trivial proper closed connected normal subgroup. It is known that every connected semisimple algebraic group (over $k$) is obtained as a Chevalley group based on $k$. Detailed constructions can be obtained from Steinberg's book \cite{St2}. The reader may also refer to \cite[Section 1.1]{bb} for a brief discussion leading to the definition of a Chevalley group. For basic properties of algebraic groups, one may refer to \cite{hum} or \cite{sp}.

Let $G$ be a connected semisimple algebraic group over $k$ and $\Phi$ the associated root system. Fix an arbitrary ordering on $\Phi$.  Viewing $G$  as a Chevalley group of type $\Phi$ based on $k$, one knows that it is generated by a subset $\{x_\alpha(t): \alpha\in \Phi,t\in k\}\subset G$. We record some properties of these generators :

\vspace*{3mm}

\noindent 1. For any $\alpha\in \Phi$ and $t,u\in k$, 
\begin{align}
x_\alpha(t)x_\alpha(u)=x_\alpha(t+u).
\end{align}

\noindent 2. \emph{Chevalley's commutator formula}: For any $\alpha,\beta\in \Phi$ and $t,u\in k$, 
\begin{align}\label{commutator}
x_{\alpha}(t)x_{\beta}(u)=x_{\beta}(u)x_{\alpha}(t)\prod_{\substack{i,j>0\\i\alpha+j\beta\in \Phi}}x_{i\alpha+j\beta}((-1)^{i+j}c_{ij}t^iu^j),
\end{align}
where the product on the right hand side is taken over all roots in the chosen ordering of $\Phi$ and $c_{ij}\in\{\pm1,\pm2,\pm3\}$ (depending on $\alpha,\beta$ and the ordering of $\Phi$).

\noindent 3. For any $\alpha\in \Phi$ and $t\in k^\times$, set
\begin{align}
 n_{\alpha}(t)=&x_{\alpha}(t)x_{-\alpha}(-t^{-1})x_{\alpha}(t), \\
 h_\alpha(t)=&n_{\alpha}(t)n_{\alpha}(-1).
\end{align}
Then $h_\alpha(t)h_\alpha(s)=h_\alpha(ts)$, for all $t,s\in k^\times$.
The subgroup $T=\langle h_\alpha(t):\alpha\in \Phi, t\in k^\times\rangle$ is a maximal torus of $G$. In fact, if $\Delta$ is a simple subsystem of $\Phi$, and $\Phi^+$ is the positive subsystem determined by $\Delta$, then $T=\langle h_\alpha(t): \alpha\in \Delta, t\in k^\times\rangle$. A maximal closed unipotent subgroup of $G$ is given by  $U:=\langle x_\alpha(t): t\in k,\alpha\in\Phi^+\rangle$ and $B=TU$ is a Borel subgroup of $G$. Also, every element of $U$ can be expressed uniquely as $\prod\limits_{\alpha\in \Phi^+}x_\alpha(t_\alpha)$ (for some $t_\alpha\in k$), the product being taken according to the fixed ordering on $\Phi^+$.

\noindent 4. For any $\alpha,\beta\in \Phi$, $t\in k^\times, u\in k$,
\begin{align}\label{1.5}
h_\alpha(t)x_\beta(u)h_\alpha(t)^{-1}=x_\beta(t^{\langle\beta,\alpha\rangle}u),
\end{align}
where $\langle\beta,\alpha\rangle:=\frac{2(\beta,\alpha)}{(\alpha,\alpha)}\in \mathbb{Z}$, and $(\cdot,\cdot) $ denotes the standard bilinear form on the Euclidean space spanned by $\Phi$.

Next, we collect some useful results.
\begin{lemma}\label{mindimclosed}
Let $G$ be an algebraic group acting morphically on an affine variety $X$. Then 
\begin{enumerate}
\item\cite[Proposition 8.3]{hum}\label{mindim} orbits of minimal dimension are closed;
\item\cite[Proposition 2.4.14]{sp}\label{uniclosed} if $G$ is unipotent, then all $G$-orbits in $X$ are closed.
\end{enumerate}
\end{lemma}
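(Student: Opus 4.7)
The statement collects two classical facts about algebraic group actions, so my plan is to outline the standard arguments for each part separately.

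For part (1), the strategy is to exploit the fact that every orbit is locally closed, so its boundary consists of orbits of strictly smaller dimension. In more detail, I would first recall that the orbit map $\mu_x : G \to X$, $g \mapsto g\cdot x$, has image $O = G\cdot x$ which is constructible by Chevalley's theorem, and in particular contains a dense open subset of its closure $\overline{O}$. Since $O$ is $G$-stable and translates of this open dense subset cover all of $O$, it follows that $O$ is open in $\overline{O}$, hence the complement $\overline{O}\setminus O$ is closed and $G$-stable. As $\overline{O}$ is irreducible of dimension $\dim O$, any proper closed subset has strictly smaller dimension, and in particular every orbit appearing in $\overline{O}\setminus O$ has dimension $<\dim O$. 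Taking $O$ to be of minimal dimension forces $\overline{O}\setminus O=\emptyset$, so $O$ is closed.

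For part (2), my approach is the Kostant--Rosenlicht argument. Let $O = G\cdot x$ be an orbit of the unipotent group $G$ on the affine variety $X$, and suppose for contradiction that $O$ is not closed. By part (1)'s reasoning, $Y := \overline{O}\setminus O$ is a nonempty closed $G$-stable subset, so the ideal $I\subset k[\overline{O}]$ of functions vanishing on $Y$ is nonzero and $G$-stable. Pick any nonzero $f_0\in I$ and let $V$ be the $G$-submodule of $k[\overline{O}]$ generated by $f_0$; this is finite dimensional (a standard fact from the theory of rational $G$-modules on affine varieties). Since $G$ is unipotent, every nonzero finite-dimensional rational $G$-module has a nonzero fixed vector, giving $0\neq f\in V^G\cap I$. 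Being $G$-invariant, $f$ is constant on the dense orbit $O\subset\overline{O}$, hence constant on all of $\overline{O}$; but $f$ vanishes on the nonempty set $Y$, forcing $f\equiv 0$, a contradiction. Hence $O=\overline{O}$.

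The main technical point—and where I would be most careful—is justifying that every orbit is locally closed in part (1), since the rest of that argument is essentially dimension-counting; for part (2) the delicate step is producing a nonzero $G$-invariant element in $I$, which relies crucially on the hypothesis that $G$ is unipotent (so that the Lie--Kolchin type fixed vector theorem applies to any finite-dimensional rational representation). Both results are standard enough that the cleanest write-up is simply to cite \cite{hum} and \cite{sp} as the author does.
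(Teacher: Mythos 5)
Your proposal is correct, and both arguments are the standard ones found in the cited references. The paper itself gives no proof of this lemma---it is stated purely as a pair of citations (Humphreys, Proposition~8.3, and Springer, Proposition~2.4.14)---so there is no in-text proof to compare against; your write-up faithfully reconstructs what those sources do. For part~(1), the key input that orbits are locally closed (so the boundary of an orbit is a union of orbits of strictly smaller dimension) is exactly the Chevalley-constructibility plus $G$-translation argument you sketch. For part~(2), your Kostant--Rosenlicht argument is the standard one: the only point worth stressing is that $V^G\cap I = V^G$ since $I$ is $G$-stable and $V\subset I$, so the nonzero fixed vector you obtain automatically lies in $I$; you phrase it as $V^G\cap I$, which is fine but slightly obscures that no extra intersection argument is needed.
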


\begin{lemma}\label{autsimple}\cite[Lemma 7]{bb} If $G$ is a simple algebraic group, then $\mathrm{Aut}_{\mathrm{alg}}(G^n)\cong S_n\ltimes(\mathrm{Aut}_{\mathrm{alg}}(G))^n$ ($n\geq 1$), where $S_n$ denotes the group of all permutations on $n$ symbols.
\end{lemma}

\begin{lemma}\label{nonsolv}\cite[Corollary 18]{bb}
Let $G$ be an algebraic group such that $G^\circ$ is non-solvable. Then $G$ has the algebraic $R_\infty$-property.
\end{lemma}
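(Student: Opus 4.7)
The plan is a three-stage reduction followed by a torus argument. First, I reduce to the case that $G$ is connected: $G^\circ$ is a characteristic subgroup of finite index $n$, so $\varphi$ preserves it, and for any $x \in G^\circ$ the coset decomposition $G = \bigsqcup_{i=1}^n g_i G^\circ$ shows that $[x]_\varphi$ meets $G^\circ$ in a union of at most $n$ distinct $\varphi|_{G^\circ}$-twisted conjugacy classes of $G^\circ$. Hence $R(\varphi) \geq R(\varphi|_{G^\circ})/n$, so if the latter is infinite so is the former. Next, since $G$ is non-solvable, $R_s(G)$ is a proper characteristic subgroup, and $\varphi$ descends to an automorphism $\bar\varphi$ of the non-trivial connected semisimple quotient $G/R_s(G)$; the projection induces a surjection $R(\varphi) \twoheadrightarrow R(\bar\varphi)$, so it suffices to treat connected semisimple $G$.

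For such $G$, the finite center $Z(G)$ is characteristic, and the same surjection argument lets us pass to the adjoint group $G_{\mathrm{ad}}$, which is a direct product of simple algebraic groups $G_{\mathrm{ad}} = S_1 \times \cdots \times S_r$. Since $\varphi$ permutes the minimal connected normal subgroups, it permutes the factors $S_i$; decomposing this permutation into disjoint cycles gives a $\varphi$-invariant product decomposition, and twisted Reidemeister sets split as products over such factors. On a single cycle $S^m$ a direct substitution shows that $\varphi$-twisted conjugacy reduces to $\psi$-twisted conjugacy in one copy of $S$, where $\psi$ is the composition of the $m$ coordinate automorphisms. The problem therefore reduces to showing $R(\psi) = \infty$ for every simple algebraic group $S$ and every $\psi \in \Aut(S)$.

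For the simple case, I would invoke Steinberg's fixed-pair theorem to find a $\psi$-stable pair $(T, B)$ (a maximal torus in a Borel subgroup). The identity $R(\Int(h) \circ \psi) = R(\psi)$ for any $h \in S$---verified by the bijection $y \mapsto yh$ between $(\Int(h)\circ\psi)$-twisted classes and $\psi$-twisted classes---lets us modify $\psi$ by an inner automorphism and assume it is a pinned automorphism, so $\psi|_T$ is induced by a diagram automorphism $\delta$ of order $1$, $2$, or $3$. Since $\delta$ permutes the simple roots, the $\delta$-fixed subspace of $X^*(T) \otimes \Q$ is non-zero (the sum over any $\delta$-orbit of simple roots is fixed); dually, $T^\psi$ contains a sub-torus of positive dimension. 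The endomorphism $t \mapsto t\psi(t)^{-1}$ on $T$ then has image of codimension $\dim T^\psi \geq 1$, so the quotient $T/(1-\psi)T$ is a positive-dimensional, hence infinite, algebraic group.

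To finish, I would apply the $\psi$-twisted analogue of the classical fact that semisimple $S$-conjugacy classes meet $T$ in single Weyl-group orbits: two elements of $T$ are $S$-$\psi$-twisted conjugate if and only if they are $N_S(T)$-$\psi$-twisted conjugate. Since $W = N_S(T)/T$ is finite, the number of $S$-$\psi$-classes meeting $T$ is at least $|T/(1-\psi)T|/|W|$, which is infinite, whence $R(\psi) = \infty$. The main obstacle is this last step---establishing that $S$-$\psi$-conjugacy inside $T$ is already detected by $N_S(T)$---which requires a twisted version of the Bruhat decomposition and the structure theory of $\psi$-semisimple elements in the spirit of Springer and Steinberg.
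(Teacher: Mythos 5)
Your reduction scaffolding is sound and is essentially the standard route: pass to $G^\circ$ by the finite-index counting argument, to the semisimple quotient via Lemma~\ref{ses} applied to $R_s(G)$, to the adjoint group via the characteristic center, and to a single simple factor $S$ via the cycle decomposition of the induced permutation. The fixed-pair theorem, twisting by an inner automorphism to pin $\psi$, and the observation that a diagram automorphism $\delta$ fixes the sum over each of its orbits of simple roots (so $(X^*(T)\otimes\Q)^\delta\neq 0$, hence $(T^\psi)^\circ$ is positive-dimensional and $T/(1-\psi)T$ is infinite) are all correct.

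The gap is precisely the step you flag at the end, and it is a real one. The assertion that two elements of $T$ are $S$-$\psi$-twisted conjugate if and only if they are $N_S(T)$-$\psi$-twisted conjugate is a genuine theorem, not a formality: it is the twisted analogue of the classical statement that $G$-conjugacy of semisimple elements in a maximal torus is controlled by the Weyl group, and it requires either a $\psi$-twisted Bruhat/Steinberg analysis or an appeal to Springer or Mohrdieck on twisted conjugacy of semisimple elements. It also has a subtlety you do not address: the right notion of ``semisimple'' for the twisted action is that $t\psi$ be semisimple in $S\rtimes\langle\psi\rangle$, and for $t\in T$ this can fail when $\mathrm{char}\,k$ divides the order of $\delta$, so the claim cannot simply be borrowed verbatim from the untwisted case. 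As written, the proposal reduces the lemma to this unproved structural statement. A way to finish without confronting it is to replace the orbit count by a $\psi$-twisted class function: $g\mapsto\mathrm{tr}\bigl(\Ad(g)\circ d\psi\bigr)$ is constant on $\psi$-twisted classes, and on $(T^\psi)^\circ$ it equals a fixed constant plus $\sum_{\alpha:\,\delta\alpha=\alpha} c_\alpha\,\alpha(t)$ with each $c_\alpha\neq 0$; since the highest root is $\delta$-fixed and restricts nontrivially to $(T^\psi)^\circ$, linear independence of characters makes this a nonconstant regular function on an irreducible positive-dimensional torus, hence it takes infinitely many values and $R(\psi)=\infty$. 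Either route would close the argument; the proposal as it stands does not.
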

\begin{lemma}\label{solvtorus}\cite[Proposition 20]{bb}
Let $G$ be a connected solvable algebraic group and $T$ a maximal torus of $G$. Suppose that $\varphi(T)=T$ implies $R(\varphi|_{T})=\infty$ for all $\varphi\in \mathrm{Aut}_{\mathrm{alg}}(G)$. Then $G$ has the algebraic $R_{\infty}$-property.
\end{lemma}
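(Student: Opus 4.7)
The plan is to reduce an arbitrary $\varphi \in \Aut(G)$ to one satisfying $\varphi(T)=T$, and then to transfer the infiniteness of Reidemeister classes from $T$ up to $G$ via the projection modulo the unipotent radical. First, since $R_u(G)$ is characteristic, $\varphi(U)=U$ where $U:=R_u(G)$. As $\varphi(T)$ and $T$ are both maximal tori of the connected solvable group $G$, the conjugacy of maximal tori (in its refined form: the conjugating element can be chosen inside $R_u(G)$) produces $u \in U$ with $u\,\varphi(T)\,u^{-1}=T$. Setting $\psi := \Int(u) \circ \varphi$, the map $f: G \to G$ defined by $f(x)=xu$ is a bijection intertwining the two twisted actions,
\begin{align*}
f(g\cdot_{\psi} x) \;=\; gx\,\psi(g^{-1})\,u \;=\; gxu\,\varphi(g^{-1})u^{-1}u \;=\; g\cdot_{\varphi} f(x),
\end{align*}
so $R(\psi)=R(\varphi)$. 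Replacing $\varphi$ by $\psi$, we may therefore assume from the outset that $\varphi(T)=T$.

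Under this assumption, the decomposition $G=T\ltimes U$ provides a group homomorphism $\pi: G\to T$ whose kernel is $U$. Since $\varphi$ stabilises both $T$ and $U$, it descends to an automorphism of $G/U\cong T$ which agrees with $\varphi|_{T}$; hence $\pi$ intertwines $\cdot_{\varphi}$ with $\cdot_{\varphi|_{T}}$ and induces a well-defined map
\begin{align*}
\pi_{*}: \mathcal{R}(\varphi) \longrightarrow \mathcal{R}(\varphi|_{T}).
\end{align*}
This map is surjective: every class in $\mathcal{R}(\varphi|_{T})$ has some $t\in T$ as a representative, and $\pi_{*}([t]_{\varphi})=[t]_{\varphi|_{T}}$ because $\pi|_{T}=\Id_{T}$. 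Combining with the hypothesis $R(\varphi|_{T})=\infty$ gives $R(\varphi)\geq R(\varphi|_{T})=\infty$, as required.

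The step I expect to be delicate is the first reduction: one must invoke exactly the right strengthening of the conjugacy of maximal tori, namely that the conjugating element lies in the unipotent radical (otherwise $f$ would be spoiled by a nontrivial toral component and one would not get the clean intertwining identity above). Once that geometric input is in hand, the remainder of the argument is essentially formal, trading on the identification of $T$ with the quotient $G/R_{u}(G)$ and on the fact that an automorphism stabilising the splitting respects the decomposition of twisted classes into their toral shadows.
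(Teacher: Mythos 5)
Your proof is correct and uses essentially the same ingredients as the paper (the lemma itself is imported from~\cite{bb}, but both of the tools you rebuild inline appear here as \lemref{inner} and \lemref{ses}). Concretely, your bijection $f(x)=xu$ re-derives $R(\Int_u\circ\varphi)=R(\varphi)$, which is the content of \lemref{inner} up to the harmless rewriting $\Int_u\circ\varphi=\varphi\circ\Int_{\varphi^{-1}(u)}$; and your surjection $\pi_*:\mathcal{R}(\varphi)\to\mathcal{R}(\varphi|_T)$ coming from the split exact sequence $1\to U\to G\to T\to 1$ is exactly \lemref{ses} with $N=U$, after observing that the induced automorphism on $G/U\cong T$ coincides with $\varphi|_T$.

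One small point to flag: the ``delicate'' refinement you invoke --- that the conjugating element can be taken inside $R_u(G)$ --- is true for connected solvable groups, but it is not actually needed. Redo your intertwining computation with an arbitrary $g\in G$ satisfying $g\varphi(T)g^{-1}=T$ and $\psi=\Int_g\circ\varphi$: with $f(x)=xg$ one gets
\begin{align*}
f(h\cdot_{\psi}x)=hx\,\psi(h^{-1})g=hx\bigl(g\varphi(h^{-1})g^{-1}\bigr)g=hxg\,\varphi(h^{-1})=h\cdot_{\varphi}f(x),
\end{align*}
so the identity is clean regardless of whether $g$ is unipotent; there is no ``toral component'' obstruction, because the conjugating factors cancel before any commutation with $\varphi(h^{-1})$ is attempted. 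Indeed, the paper itself performs the analogous reduction in the proof of \thmref{solv} using a general $g\in G$. So you can drop that precaution and the argument only simplifies.
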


\begin{lemma}\label{stalg}\cite[Theorem 10.1]{St} 
      Let $G$ be a connected algebraic group, and $\varphi:G\rightarrow G$ a surjective homomorphism of algebraic groups. Then $|G^\varphi|<\infty$ implies that $R(\varphi)=1$.
\end{lemma}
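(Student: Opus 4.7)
The plan is to prove the sharper statement that the orbit map
\[
L \colon G \to G, \qquad L(g) = g\varphi(g^{-1}),
\]
is surjective. Its image is precisely the $\varphi$-twisted conjugacy class of $e$, so surjectivity of $L$ is exactly the conclusion $R(\varphi) = 1$.

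First I would compute the differential of $L$. Expanding $L(\exp(tY))$ to first order at $t = 0$ gives $dL_e = \Id - d\varphi_e$ on $\g = \mathrm{Lie}(G)$. The assumption $|G^{\varphi}| < \infty$ forces $\mathrm{Lie}(G^{\varphi}) = \ker(d\varphi_e - \Id) = 0$, so $dL_e$ is an isomorphism. The equivariance identity
\[
L(hg) = h \cdot L(g) \cdot \varphi(h^{-1})
\]
intertwines left translation on the source with the variety automorphism $T_h(x) = h x \varphi(h^{-1})$ on the target, and applying left translations transports invertibility of the differential from $e$ to every point. Hence $L$ is \'etale. In particular $L(G)$ is open in $G$, and since the fibres of $L$ are cosets of $G^{\varphi}$ --- all of the common finite size $|G^{\varphi}|$ --- we obtain $\dim L(G) = \dim G$, so $L(G)$ is open and dense in the irreducible variety $G$.

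The remaining step is to promote ``open dense'' to all of $G$. My strategy is to show that every $\varphi$-twisted conjugacy class is open: then these classes partition $G$ into clopen subsets and connectedness of $G$ forces the partition to consist of a single class. For arbitrary $x \in G$, the orbit map $L_x(g) = g x \varphi(g^{-1})$ shares the same equivariance, and an analogous computation gives $d(L_x)_e = \Id - \Ad(x) \circ d\varphi_e$, which is invertible precisely when $G^{\varphi_x}$ is finite, where $\varphi_x := \Int(x)\circ\varphi$. For $x = L(g_0)$ lying in the orbit of $e$, a direct calculation shows $\varphi_x = \Int(g_0) \circ \varphi \circ \Int(g_0^{-1})$, giving $G^{\varphi_x} = g_0\, G^{\varphi}\, g_0^{-1}$, which is finite; so the orbit of every point of $L(G)$ is open. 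The harder part --- excluding an orbit entirely disjoint from $L(G)$ --- would follow Steinberg's approach via Zariski's Main Theorem applied to the quasi-finite \'etale morphism $L$: it factors as an open immersion into a finite \'etale cover $\widetilde{G} \to G$, and the connectedness of $G$ then forces this open immersion to be surjective. This last closure step --- ruling out lower-dimensional orbits lurking in $G \setminus L(G)$ --- is the delicate part; the differential calculation and the equivariance setup carry the bulk of the argument.
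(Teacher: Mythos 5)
The paper does not prove this lemma; it is imported verbatim from Steinberg's memoir (\cite[Theorem 10.1]{St}), so there is no in-paper argument to compare against. Your attempt is a reconstruction of the Lang--Steinberg argument, and it contains two genuine gaps.

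First, the deduction ``$|G^{\varphi}|<\infty$ forces $\ker(d\varphi_e-\Id)=0$, so $dL_e$ is an isomorphism, so $L$ is \'etale'' is false in positive characteristic, and the lemma is stated (and used in Section~4 of the paper, for Frobenius maps) precisely in a form that allows such $\varphi$. Take $G=\mathbb{G}_a$ over $k$ of characteristic $p$ and $\varphi(x)=x+x^p$. This is a surjective endomorphism with $G^{\varphi}=\{0\}$, but $d\varphi_e=\Id$, so $dL_e=0$; here $L(g)=-g^p$ is Frobenius (up to sign), purely inseparable and nowhere \'etale, yet still surjective. The point is that $\mathrm{Lie}(G^{\varphi})$ and $\ker(d\varphi_e-\Id)$ need not coincide when the scheme-theoretic fixed locus is non-reduced. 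The density of $L(G)$ should instead come from the elementary observation that every nonempty fibre of $L$ is a left coset of $G^{\varphi}$, hence finite of constant cardinality, so $\dim L(G)=\dim G$ and the constructible image contains a dense open set; no smoothness of $L$ is needed.

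Second, the ``harder part'' is not carried out, and the route you sketch does not close the gap. Zariski's Main Theorem factors the quasi-finite $L$ as an open immersion $G\hookrightarrow\widetilde{G}$ followed by a \emph{finite} morphism $\pi\colon\widetilde{G}\to G$; it does not assert $\pi$ is \'etale, and upgrading to \'etale requires an additional input such as Zariski--Nagata purity applied to a normalization. More importantly, even granting a finite \'etale $\pi$ with $\widetilde{G}$ irreducible, the claim that ``connectedness of $G$ forces the open immersion to be surjective'' is not a valid inference: an irreducible variety may contain a proper dense open subvariety, and nothing in the setup makes $G$ closed in $\widetilde{G}$ --- indeed $G=\pi^{-1}(L(G))$ is open precisely because $L(G)$ is open, and $L(G)$ closed is exactly what one is trying to prove. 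The genuine content of Steinberg's theorem --- excluding a lower-dimensional orbit hiding in $G\setminus L(G)$, equivalently showing $G^{\Int(a)\circ\varphi}$ is finite for every $a$ (so that every orbit has full dimension, hence is dense, hence unique) --- is the whole difficulty, and the proposal leaves it untouched. Your closing remark that ``the differential calculation and the equivariance setup carry the bulk of the argument'' inverts where the real weight lies.
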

\begin{lemma}\label{inner}\cite[Lemma 5]{bb}
	Let $\varphi$ be an automorphism of an algebraic group $G$ and $\mathrm{Int}_g$ the inner automorphism defined by   $g\in G$. Then $R(\varphi\circ\mathrm{Int}_g)=R(\varphi)$. In particular, $R(\mathrm{Int}_g)=R(\mathrm{Id})$, i.e., the number of inner twisted conjugacy classes in $G$ is equal to the number of conjugacy classes in $G$.
\end{lemma}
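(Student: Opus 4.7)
The plan is to exhibit an explicit bijection between the orbit sets $\mathcal{R}(\varphi\circ\mathrm{Int}_g)$ and $\mathcal{R}(\varphi)$ given by right translation. Set $\psi=\varphi\circ\mathrm{Int}_g$, so that for every $h\in G$,
\[
\psi(h)=\varphi(ghg^{-1})=\varphi(g)\,\varphi(h)\,\varphi(g)^{-1}.
\]
Therefore the $\psi$-twisted action of $h$ on $x$ is
\[
h\cdot_\psi x \;=\; hx\psi(h^{-1}) \;=\; h\bigl(x\varphi(g)\bigr)\varphi(h^{-1})\,\varphi(g)^{-1}.
\]
Right-multiplying by $\varphi(g)$ converts this into the $\varphi$-twisted action applied to $x\varphi(g)$. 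This suggests considering the map
\[
\Theta:G\longrightarrow G,\qquad x\longmapsto x\varphi(g).
\]

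The key step is to check that $\Theta$ descends to a well-defined map $\bar{\Theta}:\mathcal{R}(\psi)\to\mathcal{R}(\varphi)$. Indeed, if $y=h\cdot_\psi x$, the computation above gives $y\varphi(g)=h\bigl(x\varphi(g)\bigr)\varphi(h^{-1})$, so $\Theta(y)$ and $\Theta(x)$ lie in the same $\varphi$-orbit. The inverse map is induced by $x\mapsto x\varphi(g^{-1})$, with the analogous verification: if $y=h\cdot_\varphi x$, then $y\varphi(g^{-1})=h\bigl(x\varphi(g^{-1})\bigr)\varphi(g)\varphi(h^{-1})\varphi(g^{-1})=h\cdot_\psi\bigl(x\varphi(g^{-1})\bigr)$. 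So $\bar{\Theta}$ is a bijection and $R(\psi)=R(\varphi)$.

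For the particular case, taking $\varphi=\mathrm{Id}$ yields $R(\mathrm{Int}_g)=R(\mathrm{Id})$; and the $\mathrm{Id}$-twisted action $(h,x)\mapsto hxh^{-1}$ is ordinary conjugation, so its orbits are precisely the conjugacy classes of $G$.

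There is no real obstacle here — the argument is a one-line algebraic manipulation. The only thing that requires minor care is verifying that $\Theta$ and its inverse are genuinely inverse maps on orbits (i.e.\ that orbit equivalence is preserved in both directions, not just forward), but this is immediate from the symmetry of the computation. Note also that the statement is purely group-theoretic: it uses nothing about the algebraic structure of $G$, so the same proof works for abstract groups as well.
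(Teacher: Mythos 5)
Your argument is correct: the map $x\mapsto x\varphi(g)$ intertwines the $\varphi\circ\mathrm{Int}_g$-twisted action with the $\varphi$-twisted action, hence descends to a bijection of orbit sets, and the computation of the inverse is right. The paper itself does not reprove this lemma (it cites Lemma~5 of the authors' earlier paper \cite{bb}), but the right-translation bijection you give is the standard argument for it, and, as you observe, it is purely group-theoretic and needs no algebraic-group structure.
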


\begin{lemma}\label{ses}\cite[Lemma 6(1)]{bb}
	Let $e\longrightarrow N \overset{i}{\longrightarrow} G\overset{\pi}{\longrightarrow} Q\longrightarrow e$ be  an exact sequence of algebraic groups, and $\varphi\in\mathrm{Aut}_{\mathrm{alg}}(G)$ be such that $\varphi(N)=N$. Let $\overline{\varphi}$ denote the automorphism of $Q$ induced by $\varphi$. Then  $R(\varphi)\geq R(\overline{\varphi})$.
	
\end{lemma}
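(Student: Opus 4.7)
The plan is to produce a well-defined surjection $\mathcal{R}(\varphi) \twoheadrightarrow \mathcal{R}(\overline{\varphi})$ induced by the quotient morphism $\pi$, which immediately gives $R(\varphi) \geq R(\overline{\varphi})$.

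First I would observe that because $\pi$ is a surjective group homomorphism and $\overline{\varphi}\circ \pi = \pi\circ \varphi$ (this is exactly how the induced automorphism $\overline{\varphi}$ is defined, which relies on the hypothesis $\varphi(N)=N$), one has for every $g,x\in G$
\begin{equation*}
\pi\bigl(g\,x\,\varphi(g^{-1})\bigr) \;=\; \pi(g)\,\pi(x)\,\overline{\varphi}(\pi(g)^{-1}).
\end{equation*}
Thus $\pi$ carries the $\varphi$-twisted class $[x]_\varphi$ into the $\overline{\varphi}$-twisted class $[\pi(x)]_{\overline{\varphi}}$. This allows me to define a map
\begin{equation*}
\Pi \colon \mathcal{R}(\varphi) \longrightarrow \mathcal{R}(\overline{\varphi}), \qquad [x]_\varphi \longmapsto [\pi(x)]_{\overline{\varphi}},
\end{equation*}
and the above identity also shows that $\Pi$ is well defined (different representatives of $[x]_\varphi$ land in the same $\overline{\varphi}$-class in $Q$).

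The main — and essentially only — step is surjectivity of $\Pi$. Given any class $[\overline{y}]_{\overline{\varphi}}\in \mathcal{R}(\overline{\varphi})$, surjectivity of $\pi$ lets me choose a preimage $y\in G$ with $\pi(y)=\overline{y}$, and then $\Pi([y]_\varphi) = [\overline{y}]_{\overline{\varphi}}$. Consequently $R(\varphi) = |\mathcal{R}(\varphi)| \geq |\mathcal{R}(\overline{\varphi})| = R(\overline{\varphi})$.

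There is no serious obstacle; the only subtlety worth flagging is the verification that $\overline{\varphi}$ is genuinely a well-defined automorphism of $Q$, which is why the hypothesis $\varphi(N)=N$ is needed (so that $\varphi$ descends to the quotient and the induced map is bijective with morphism inverse $\overline{\varphi^{-1}}$). Once this is in place, the argument is purely set-theoretic and does not use any algebro-geometric structure beyond the surjectivity of $\pi$ as a map of sets.
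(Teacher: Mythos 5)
Your proof is correct, and it is the standard argument: the identity $\pi\bigl(g x \varphi(g^{-1})\bigr) = \pi(g)\pi(x)\overline{\varphi}\bigl(\pi(g)^{-1}\bigr)$ shows $\pi$ descends to a well-defined surjection $\mathcal{R}(\varphi)\twoheadrightarrow\mathcal{R}(\overline{\varphi})$, giving the inequality. This matches the approach one would take for the cited Lemma 6(1) of \cite{bb}, and as you note the argument is purely set-theoretic once $\overline{\varphi}$ is known to be a well-defined automorphism of $Q$.
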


\begin{remark}
Lemma \ref{inner} and Lemma \ref{ses} are in fact true for any abstract automorphism $\varphi$ of a group $G$.
\end{remark}

\section{Results on algebraic groups}\label{sg}
We begin with the following basic result.
\begin{lemma}\label{unitorus}
Let $G$ be a connected algebraic group and $\varphi\in \mathrm{Aut}_{\mathrm{alg}}(G)$. If $G$ is unipotent or commutative, then $R(\varphi)\in\{1,\infty\}$. 
\end{lemma}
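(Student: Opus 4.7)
The goal is to show that in each of the two cases, $R(\varphi)<\infty$ forces $R(\varphi)=1$. The two cases are essentially independent, and the arguments use very different tools (algebra in the commutative case, topology/geometry in the unipotent case), so I would treat them separately.

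For the commutative case, the key observation is that the map $\psi\colon G\to G$ defined by $\psi(g)=g\varphi(g)^{-1}$ is itself a homomorphism of algebraic groups. Indeed, writing the group operation multiplicatively but using commutativity freely,
\[
\psi(gh)=gh\,\varphi(h)^{-1}\varphi(g)^{-1}=g\varphi(g)^{-1}\cdot h\varphi(h)^{-1}=\psi(g)\psi(h).
\]
Again by commutativity, the $\varphi$-twisted orbit of any $x\in G$ equals $x\cdot\psi(G)$, so the Reidemeister classes are exactly the cosets of $\psi(G)$ in $G$ and $R(\varphi)=[G:\psi(G)]$. Since $G$ is connected and $\psi$ is a morphism, $\psi(G)$ is a closed connected subgroup. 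If $R(\varphi)<\infty$, the closed subgroup $\psi(G)$ has finite index in $G$, hence contains $G^\circ=G$, giving $\psi(G)=G$ and $R(\varphi)=1$.

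For the unipotent case, I would argue purely topologically. By \lemref{mindimclosed}(2), every orbit of the $\varphi$-twisted action of $G$ on itself is closed in $G$. Suppose $R(\varphi)=n<\infty$ and list the orbits as $O_{1},\dots,O_{n}$. Each $O_{i}$ is closed, and its complement $\bigcup_{j\neq i}O_{j}$ is a finite union of closed sets, hence closed; therefore each $O_{i}$ is also open. Since $G$ is connected and the $O_{i}$ partition $G$ into disjoint nonempty clopen pieces, we must have $n=1$, i.e., $R(\varphi)=1$.

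There is no real obstacle to either argument. The one point to verify carefully is that in the commutative case $\psi$ is genuinely a group homomorphism (which fails without commutativity), and in the unipotent case that \lemref{mindimclosed}(2) applies to this particular action — but the map $(g,x)\mapsto gx\varphi(g^{-1})$ is visibly a morphic action of the algebraic group $G$ on the affine variety $G$, so the hypothesis of that lemma is met.
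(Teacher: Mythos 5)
Your proof is correct. The unipotent half is essentially the paper's argument, with the clopen/connectivity step spelled out explicitly: all orbits are closed by \lemref{mindimclosed}(2), and finitely many disjoint closed sets partitioning a connected space force a single orbit. The commutative half, however, takes a genuinely different route. The paper also begins from the observation $[x]_\varphi=x[e]_\varphi$, but then uses it only to conclude that all orbits have the same dimension, invokes \lemref{mindimclosed}(1) (orbits of minimal dimension are closed) to deduce that all orbits are closed, and finishes by the same topological argument. You instead observe that $\psi\colon g\mapsto g\varphi(g)^{-1}$ is a homomorphism of algebraic groups precisely because $G$ is commutative, so $[e]_\varphi=\psi(G)$ is not just closed but a closed \emph{connected subgroup}, and the Reidemeister classes are exactly its cosets; finiteness of $R(\varphi)=[G:\psi(G)]$ together with connectedness of $G$ then forces $\psi(G)=G$ directly. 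Your version buys you a cleaner algebraic mechanism in the commutative case — you never need the orbit-dimension lemma, and you get connectedness of $[e]_\varphi$ for free — at the cost of giving up the uniform ``all orbits are closed'' framing that lets the paper treat both cases by the same final topological step.
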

\begin{proof}
Since $G$ is connected (in particular, irreducible), it suffices to show that all the $\varphi$-conjugacy classes in $G$ are closed. The fact that it is true for a connected unipotent group $G$, follows from Lemma \ref{mindimclosed}\eqref{uniclosed}.

So let $G$ be commutative. We note that for any $x\in G$, its orbit is given by $[x]_\varphi=\{gx\varphi(g^{-1}): g\in G\}=x\{g\varphi(g^{-1}): g\in G\}=x[e]_\varphi$. Therefore, all the orbits have same dimension and  hence, each of them is closed by Lemma \ref{mindimclosed}(\ref{mindim}).
\end{proof}

Let $T$ be an $n$-dimensional torus ($n\geq 1$). By Lemma \ref{unitorus}, if $R(\varphi)<\infty$ for some $\varphi\in \mathrm{Aut}_{\mathrm{alg}}(T)$, then the $\varphi$-conjugacy action is transitive. We deduce a couple of conditions on $\varphi$, which are equivalent to the fact that $R(\varphi)=1$. Without loss of generality assume that $T=\mathbb{G}_m^n$ and identify  $\mathrm{GL}_n(\mathbb{Z})$ with $\mathrm{Aut}_{\mathrm{alg}}(T)$ via
$A=\begin{pmatrix}
a_{ij}
\end{pmatrix} \mapsto \varphi_A$; the automorphism $\varphi_A$ being defined by $\varphi_A((t_1,\ldots, t_n))=(s_1,\ldots, s_n)$, where $s_i=\prod\limits_{j=1}^nt_j^{a_{ij}}$ for all $t_i\in \mathbb{G}_m (1\leq i\leq n)$. A matrix in $\mathrm{SL}_{n}(\mathbb{Z})$ is called \emph{elementary} if it is of the form $E_{ij}(c)\; (1\leq i\neq j\leq n, c\in\mathbb{Z})$, where $(i,i)^{th}$ entry is $1$ for all $i$, $(i,j)^{th}$ entry is $c$ and every other entry is zero. With this notation, we have the following.

\begin{theorem}\label{torus}
	For an element $\varphi\in\mathrm{Aut}_{\mathrm{alg}}(T)$, the following are equivalent:
	\begin{enumerate}
		\item $R(\varphi)=1$.
		\item $\det(A-\mathrm{Id})\neq 0$, where $A\in \mathrm{GL}_n(\mathbb{Z})$ is such that $\varphi=\varphi_A$.
		\item The fixed-point subgroup $T^{\varphi}=\{t\in T: \varphi(t)=t\}$ is finite.
	\end{enumerate}
\end{theorem}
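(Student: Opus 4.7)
The plan is to establish the cycle $(3) \Rightarrow (1) \Rightarrow (3)$ together with the separate equivalence $(2) \Leftrightarrow (3)$. The implication $(3) \Rightarrow (1)$ is immediate from \lemref{stalg}: since $T$ is a connected algebraic group and $\varphi \in \Aut(T)$ is a surjective morphism of algebraic groups, finiteness of $T^\varphi$ forces $R(\varphi) = 1$.

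For $(1) \Rightarrow (3)$, I would argue as follows. If $R(\varphi) = 1$, then there is a single $\varphi$-twisted orbit, which must therefore be all of $T$; in particular, the morphism $\pi : T \to T$ defined by $\pi(g) = g\varphi(g)^{-1}$ is surjective. Because $T$ is abelian, $\pi$ is in fact a group homomorphism with kernel exactly $T^\varphi$. The resulting short exact sequence $1 \to T^\varphi \to T \xrightarrow{\pi} T \to 1$, combined with a dimension comparison, then forces $\dim T^\varphi = 0$, so that $T^\varphi$ is finite.

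For $(2) \Leftrightarrow (3)$, the plan is to compute $T^\varphi$ directly. Writing $t = (t_1, \dots, t_n)$, the fixed-point condition $\varphi_A(t) = t$ unravels to the system $\prod_{j=1}^{n} t_j^{a_{ij} - \delta_{ij}} = 1$ for $1 \leq i \leq n$; thus $T^\varphi$ coincides with the kernel of the endomorphism of $\mathbb{G}_m^n$ associated to the integer matrix $A - \Id$. By elementary structure theory for tori, the identity component of this kernel has dimension $n - \mathrm{rank}(A - \Id)$, so the kernel is finite precisely when $\mathrm{rank}(A - \Id) = n$, i.e., when $\det(A - \Id) \neq 0$. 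The only step that requires any genuine care is the dimension bookkeeping in $(1) \Rightarrow (3)$, but this is entirely routine given \lemref{unitorus} and \lemref{stalg}; I anticipate no serious obstacle.
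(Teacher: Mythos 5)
Your proof is correct, and the overall logic is sound. The implications $(3) \Rightarrow (1)$ via Steinberg's theorem and $(1) \Rightarrow (3)$ via a dimension count coincide with the paper's argument in substance; you phrase $(1) \Rightarrow (3)$ through the group homomorphism $\pi(g) = g\varphi(g)^{-1}$ (valid because $T$ is abelian) rather than through the orbit-stabilizer dimension formula applied to the twisted action, but these are two ways to say the same thing. The genuine structural difference lies in how the remaining equivalence is established: the paper proves $(1) \Leftrightarrow (2)$ directly, writing out the defining system $t_1^{a_{i1}}\cdots t_i^{a_{ii}-1}\cdots t_n^{a_{in}} = x_i$, using the adjugate of $A - \Id$ for the forward direction and a reduction to triangular form over $\SL_n(\mathbb{Z})$ (together with algebraic closedness of $k$) to solve the system for the converse. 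You instead prove $(2) \Leftrightarrow (3)$ by identifying $T^\varphi$ as the kernel of the endomorphism of $\mathbb{G}_m^n$ associated to the integer matrix $A - \Id$ and invoking the dimension formula $\dim\ker = n - \mathrm{rank}(A - \Id)$ from the structure theory of tori. Your route is cleaner and more conceptual, replacing an explicit equation-solving argument with a one-line appeal to the antiequivalence between diagonalizable groups and finitely generated abelian groups; the paper's route is more self-contained and constructive, explicitly exhibiting a solution $t$ for each $x$ when $\det(A - \Id) \neq 0$. Both are complete and correct proofs of the same cycle of implications.
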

\begin{proof}
	\emph{(1) $\Leftrightarrow$ (2)}
	Let $\varphi=\varphi_A\in \mathrm{Aut}_{\mathrm{alg}}(T)$, for some $A=(a_{ij})\in \mathrm{GL}_n(\mathbb{Z})$ and assume that $R(\varphi)=1$. Thus $x\in [e]_\varphi=\{t^{-1}\varphi(t):t\in T\}$ for every $x\in T$. In other words, for every $x=(x_1,\ldots,x_n)\in T$ ($x_i\in \mathbb{G}_m$), there exists $t=(t_1,\ldots,t_n)\in T$ such that the following equations hold:
	\begin{align}\label{1}
	t_1^{a_{i1}}t_2^{a_{i2}}\cdots t_i^{a_{ii}-1}\cdots t_n^{a_{in}}=x_i,~ 1\leq i\leq n.
	\end{align}
	
	Treating $T$ as a $\mathbb{Z}$-module and writing Equation \eqref{1} additively we get
	
	\begin{align}
	a_{i1}t_1+\cdots+(a_{ii}-1)t_i+\cdots+a_{in}t_n=x_i,\; 1\leq i\leq n.
	\end{align}
	
	So we have a matrix equation 
	
	\begin{align}\label{2}\begin{pmatrix}a_{11}-1&\hdots&a_{1n}\\
	\vdots&\ddots&\vdots\\
	a_{n1}&\hdots&a_{nn}-1
	\end{pmatrix}
	\begin{pmatrix}
	t_1\\\vdots\\t_n
	\end{pmatrix}=\begin{pmatrix}
	x_1\\\vdots\\x_n
	\end{pmatrix}.
	\end{align}
	
	\noindent Now if possible let $\det(A-\mathrm{Id})=0$. Then by pre multiplying both sides of Equation \eqref{2}  by $\mathrm{Adj}(A-\mathrm{Id})$ we get
	
	\begin{align}\label{3}
	\mathrm{Adj}(A-\mathrm{Id})\begin{pmatrix}
	x_1\\\vdots\\x_n
	\end{pmatrix}=\begin{pmatrix}
	0\\\vdots\\0
	\end{pmatrix},
	\end{align} where the $0$ in Equation \eqref{3} denotes the zero element of the $\mathbb{Z}$-module $\mathbb{G}_m$. Then one can easily find suitable $x_i\in \mathbb{G}_m$ for which Equation \eqref{3} fails, a contradiction.  
	
	 Conversely, suppose that $\det(A-\mathrm{Id})\neq 0$. There exists elementary matrices $E_1,\ldots,E_l$ in $\mathrm{SL}_n(\mathbb{Z})$ such that $E_1\cdots E_l(A-\mathrm{Id})=(b_{ij})$, where $b_{ij}=0$ for all $i>j$ and $b_{ii}\neq 0 \;(1\leq i\leq n)$. Now since the base field is algebraically closed, for every $x=(x_1,\ldots,x_n)\in T$ there exists $t=(t_1,\ldots,t_n)\in T$ such that 
	\begin{align}
	(b_{ij})\begin{pmatrix}
	t_1\\\vdots\\t_n
	\end{pmatrix}=E_1\cdots E_l\begin{pmatrix}
	x_1\\\vdots\\x_n
	\end{pmatrix}.
	\end{align}
	Therefore $t$ and $x$ satisfy Equation \eqref{2} and hence Equation \eqref{1}, thereby showing that $R(\varphi)=1$.
	
	\vspace*{2mm}

	\emph{(3) $\Leftrightarrow$ (1)} If $T^{\varphi}$ is finite, then by Lemma \ref{stalg} we get $R(\varphi)=1$.
	On the other hand suppose that $R(\varphi)=1$. Since $\mathrm{dim}(T)-\mathrm{dim}(\mathrm{Stab}_T(e))=\mathrm{dim}([e]_\varphi)=\mathrm{dim}(T)$, we conclude that $T^\varphi=\mathrm{Stab}_T(e)$ is finite. 
\end{proof}

\begin{remark}


A result analogous to Lemma \ref{unitorus} (respectively, Theorem \ref{torus}) was observed in \cite[Proposition 3.2]{dekgon} (respectively, \cite[Lemma 4.1]{rom}) for a divisible abelian group (respectively, a free abelian group of finite rank).
\end{remark}

Next, we show that the conclusion of Lemma \ref{unitorus} is true if the group $G$ therein is assumed to be solvable.

\begin{theorem}\label{solv}
If $G$ is a connected solvable algebraic group and $\varphi\in \mathrm{Aut}_{\mathrm{alg}}(G)$, then $R(\varphi)\in\{1, \infty\}$.
\end{theorem}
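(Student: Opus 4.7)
The plan is to show directly that if $R(\varphi)<\infty$ then in fact $R(\varphi)=1$, by means of a generic-orbit argument combined with Steinberg's fixed-point lemma. Interestingly, solvability of $G$ will not really enter the argument; the restriction in the statement simply reflects that the non-solvable case has already been handled by \lemref{nonsolv}.

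First I would note that the $\varphi$-twisted conjugacy action $G\times G\to G$, $(g,x)\mapsto gx\varphi(g^{-1})$, is a morphic action of $G$ on itself, so each orbit $[x]_\varphi$, being the image of the orbit map $g\mapsto gx\varphi(g^{-1})$, is a locally closed, irreducible subvariety of $G$. Assuming $R(\varphi)<\infty$, we can write $G=\bigcup_{i=1}^{r}\overline{[x_i]_\varphi}$ as a finite union of closed subsets. Since $G$ is connected and hence irreducible, some closure $\overline{[x_i]_\varphi}$ equals $G$; denote the corresponding orbit by $O$. Being a dense, locally closed subset of the irreducible variety $G$, the orbit $O$ is open, so $\dim O=\dim G$.

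Next, for any $x\in O$, a direct computation shows that $gx\varphi(g^{-1})=x$ if and only if $g=\Int_x(\varphi(g))$; hence the stabilizer of $x$ under the twisted action is $G^{\Int_x\circ\varphi}$. By the orbit--stabilizer formula this stabilizer is $0$-dimensional, and, as a closed subgroup of an algebraic group, is therefore finite. Applying \lemref{stalg} to the algebraic automorphism $\Int_x\circ\varphi$ then yields $R(\Int_x\circ\varphi)=1$. Since $\Int_x\circ\varphi=\varphi\circ\Int_{\varphi^{-1}(x)}$, \lemref{inner} finally gives $R(\varphi)=R(\Int_x\circ\varphi)=1$, as desired.

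The only pieces that require verification are the standard facts that orbits of a morphic algebraic action are locally closed (so that ``locally closed plus dense in an irreducible variety implies open'' can be invoked) and that a $0$-dimensional closed subgroup of an algebraic group is finite; everything else is an assembly of the previously cited lemmas. The potentially conceptual step to flag is the identification of the twisted stabilizer with the $\varphi$-twisted fixed-point subgroup $G^{\Int_x\circ\varphi}$, which is exactly what bridges the geometric dichotomy (dense open orbit vs.\ orbits of positive codimension) with Steinberg's lemma.
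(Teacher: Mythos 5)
Your proof is correct, and it takes a genuinely different and in fact more general route than the paper's. The paper proves the dichotomy by exploiting the structure theory of connected solvable groups: after arranging $\varphi(T)=T$ via an inner twist, it decomposes $G=T\ltimes U$, applies \lemref{unitorus} to know that $R(\varphi|_T)$ and $R(\varphi|_U)$ each lie in $\{1,\infty\}$, and then runs a three-case argument (Case 1: both equal $1$, yielding $R(\varphi)=1$ by an explicit twisted-conjugation calculation; Case 2: $R(\varphi|_T)=1$, $R(\varphi|_U)=\infty$, yielding $R(\varphi)=\infty$ via a counting argument using finiteness of $T^\varphi$; Case 3: $R(\varphi|_T)=\infty$, yielding $R(\varphi)=\infty$ via \lemref{solvtorus}). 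Your argument instead observes that since the orbits of a morphic action are locally closed (full strength of \cite[Prop.~8.3]{hum}, not just the ``minimal orbits are closed'' fragment the paper quotes), finitely many orbits covering the irreducible $G$ forces one dense open orbit, hence a finite twisted stabilizer $G^{\Int_x\circ\varphi}$, and then Steinberg's \lemref{stalg} plus \lemref{inner} force $R(\varphi)=1$. Your route is shorter, avoids the torus/unipotent case analysis and \lemref{unitorus} and \lemref{solvtorus} entirely, and — as you correctly flag — establishes the dichotomy $R(\varphi)\in\{1,\infty\}$ for \emph{every} connected algebraic group (solvable or not), which the paper only obtains after the fact by combining \thmref{solv} with \lemref{nonsolv}; with your argument, \thmref{iff} would follow immediately. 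What the paper's proof buys in exchange is finer control: it locates exactly where the failure of $R_\infty$ sits (in the torus or in the unipotent radical), and that case breakdown is what gets re-used in the proofs of \lemref{tga} and \thmref{borel}.
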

\begin{proof}
Let $G=T\ltimes U$, where $T$ is a maximal torus and $U$ is the unipotent radical of $G$. Since $\varphi(T)$ is also a maximal torus, there exists $g\in G$ such that $g\varphi(T)g^{-1}=T$ and by Lemma \ref{inner}, $R(\varphi)=R(\mathrm{Int}_g\varphi)$. Thus without loss of any generality we assume that $\varphi(T)=T$. By virtue of Lemma \ref{unitorus}, $R(\varphi|_T)$ (respectively, $R(\varphi|_U)$) is either $1$ or $\infty$. So, we consider the following cases:

\vspace*{2mm}

\noindent\textbf{Case 1:} If $R(\varphi|_T)=R(\varphi|_U)=1$, then we \textbf{claim} that $R(\varphi)=1$. So take any element $sv\in G$, where $s\in T$, $v\in U$. Let $t\in T$ be such that $ts\varphi(t^{-1})=e$. Then $tsv\varphi(t^{-1})=ts\varphi(t^{-1})\varphi(t)v\varphi(t^{-1})=\varphi(t)v\varphi(t^{-1})=w $ (say). Note that $w\in U$ and therefore by our assumption, there exists $u\in U$ such that $uw\varphi(u^{-1})=e$. Thus $utsv\varphi(t^{-1}u^{-1})=e$ and this proves the claim.

\vspace*{2mm}

\noindent\textbf{Case 2:} Let $R(\varphi|_T)=1$ and $R(\varphi|_U)=\infty$. In this case we intend to show that $R(\varphi)=\infty$. So, if possible let there exist only finitely many $\varphi$-conjugacy classes in $G$. First, observe that just as in Case 1 above, every element of $G$ is $\varphi$-conjugate to an element of $U$. So let $[v_1]_\varphi,\ldots,[v_m]_\varphi$ be all the distinct classes in $G$ with the $v_i$'s in $U$. Since $R(\varphi|_T)=1$, by Theorem \ref{torus} let $T^\varphi=\{t_1,\ldots,t_n\}$ and set $S=\bigcup\limits_{i=1}^m S_i$, where $S_i:=\{[t_j^{-1}v_it_j]_{\varphi|_U}: 1\leq j\leq n\}$. Note that $S$ is a finite set of $\varphi|_U$-conjugacy classes in $U$. So let $v\in U$ be an arbitrary element. Then $v\sim_\varphi v_i$ for some $1\leq i\leq m$. So, there exists $t\in T, u\in U$ such that $v_i=tuv\varphi(u^{-1}t^{-1})=t\varphi(t^{-1})\varphi(t)uv\varphi(u^{-1})\varphi(t^{-1})$. Therefore, $t\varphi(t^{-1})=e$ which implies that $t\in T^\varphi$. So, if $t=t_l\in T^\varphi$, then $v_i=t_luv\varphi(u^{-1})t_l^{-1}$ which shows that $v\sim_{\varphi|_U} t_l^{-1}v_it_l$ or equivalently $[v]_{\varphi|_U}\in S$. Thus the finite set $S$ accounts for all the $\varphi|_U$-conjugacy classes of $U$ contrary to the assumption that $R(\varphi|_U)=\infty$.

\vspace*{2mm}

\noindent\textbf{Case 3:} If $R(\varphi|_T)=\infty$, then from the proof of Lemma \ref{solvtorus}, it follows that $R(\varphi)=\infty$. We provide an argument for the sake of completeness. For any $t\in T$, $[t]_\varphi=\{st\varphi(s^{-1})\varphi(s)t^{-1}ut\varphi(u^{-1})\varphi(s^{-1}):s\in T, u\in U\}$. Therefore, it is clear that if $R(\varphi)<\infty$, then $R(\varphi|_T)<\infty$ as required.

\vspace*{2mm}
This completes the proof.
\end{proof}
       Now let $G$ be a connected algebraic group. First, suppose that $G$ has the algebraic $R_\infty$-property. Then for every $\varphi\in \mathrm{Aut}_{\mathrm{alg}}(G)$, $R(\varphi)\neq 1$ and hence, by Lemma \ref{stalg}, $|G^\varphi|=\infty$. Conversely, suppose that there exists a $\varphi\in \mathrm{Aut}_{\mathrm{alg}}(G)$ for which $R(\varphi)<\infty$. Then by Lemma \ref{nonsolv}, $G$ is necessarily solvable. So, by Theorem \ref{solv}, we have $R(\varphi)=1$. Therefore  $\mathrm{dim}(G^\varphi)=\mathrm{dim}(G)-\mathrm{dim}([e]_\varphi)=0$ and hence, $G^\varphi$ is finite. We summarize this as
 
\begin{theorem}\label{iff}
       A connected algebraic group $G$ has the algebraic $R_{\infty}$-property if and only if the fixed point subgroup $G^{\varphi}$ is infinite for all $\varphi\in \mathrm{Aut}_{\mathrm{alg}}(G)$. 
\end{theorem}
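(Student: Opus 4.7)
The plan is to prove both directions by combining Steinberg's theorem (Lemma \ref{stalg}), the non-solvable dichotomy (Lemma \ref{nonsolv}), and the solvable trichotomy collapse (Theorem \ref{solv}).

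For the forward direction, suppose $G$ has the $R_\infty$-property. I would argue the contrapositive: if some $\varphi\in\Aut(G)$ had $G^\varphi$ finite, then Steinberg's theorem (Lemma \ref{stalg}), applied to the surjective algebraic homomorphism $\varphi:G\to G$, would force $R(\varphi)=1$, contradicting $R_\infty$. So $G^\varphi$ must be infinite for every $\varphi$.

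For the converse, again I would proceed by contrapositive: assume $G$ fails the $R_\infty$-property, so there exists $\varphi\in\Aut(G)$ with $R(\varphi)<\infty$; the goal is to exhibit that $G^\varphi$ is finite. First, Lemma \ref{nonsolv} forces $G$ to be solvable (otherwise $G^\circ=G$ being non-solvable would give the $R_\infty$-property). Then Theorem \ref{solv} eliminates the middle ground, pinning down $R(\varphi)=1$. In particular the $\varphi$-twisted action of $G$ on itself is transitive, so the orbit of $e$ is all of $G$.

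The dimension count then finishes things: the stabilizer of $e$ under the $\varphi$-twisted action is exactly $G^\varphi=\{g\in G\mid g\varphi(g^{-1})=e\}$, so
\[
\dim G^\varphi=\dim G-\dim [e]_\varphi=\dim G-\dim G=0,
\]
which means $G^\varphi$ is a $0$-dimensional algebraic subgroup of $G$, hence finite. Combining both directions gives the claimed equivalence.

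I do not anticipate any real obstacle: all the substantive work has already been done in Theorem \ref{solv} (handling the solvable case) and Lemma \ref{nonsolv} (handling the non-solvable case). The present theorem is essentially a packaging statement that assembles these ingredients via Steinberg's theorem and an orbit-stabilizer dimension count, so the only care required is to make sure the dichotomy argument is logically clean and that the stabilizer is correctly identified with $G^\varphi$.
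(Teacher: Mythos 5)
Your proposal is correct and follows essentially the same argument as the paper: Steinberg's theorem (Lemma \ref{stalg}) for the forward direction, then Lemma \ref{nonsolv} to reduce to the solvable case, Theorem \ref{solv} to pin down $R(\varphi)=1$, and the orbit-stabilizer dimension count (with the correct identification of the stabilizer of $e$ with $G^\varphi$) to conclude finiteness.
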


Next, we record the following necessary and sufficient condition for a twisted conjugacy action to be transitive.

\begin{theorem}\label{general}
Let $G$ be a connected algebraic group and $\varphi\in \mathrm{Aut}_{\mathrm{alg}}(G)$ such that $\varphi(N)=N$ for some connected normal subgroup $N$. Let $\overline{\varphi}$ denote the automorphism of $G/N$ induced by $\varphi$. Then $R(\varphi)=1$ if and only if $R(\varphi|_N)=R(\overline{\varphi})=1$.
\end{theorem}

\begin{proof}
First, assume that $R(\varphi)=1$. Then obviously $R(\overline{\varphi})=1$ and hence, the fixed point subgroup $(G/N)^{\overline{\varphi}}$ is finite. Let $(G/N)^{\overline{\varphi}}=\{g_1N,\ldots, g_lN\}$ for some $g_1,\ldots,g_l\in G$, $l\in \mathbb{N}$. Note that for each $i\in \{1,\ldots,l\}$, $x_i:=g_i^{-1}\varphi(g_i)\in N$. We \textbf{claim} that $\{[x_i]_{\varphi|_N}:1\leq i\leq l\}$ is the set of all orbits of the $\varphi|_N$-twisted action of $N$ on itself. If the claim is true, then $R(\varphi|_N)<\infty$. Therefore by Lemma \ref{nonsolv}, $N$ is solvable and hence, $R(\varphi|_N)=1$ by Theorem \ref{solv}. To prove the claim, let $x\in N$ be an arbitrary element. Write $x=g^{-1}\varphi(g)$ for some $g\in G$ (this is possible as $R(\varphi)=1$). Thus $gN\in (G/N)^{\overline{\varphi}} $ and therefore, $gN=g_iN$ for some $i\in\{1,\ldots,l\}$. So we have $g_i^{-1}g\in N$ and hence, $g_i^{-1}gx\varphi(g^{-1}g_i)=g_i^{-1}\varphi(g_i)=x_i$ shows that $x$ is $\varphi|_N$-conjugate to $x_i$. This proves the claim.

Conversely, let $g\in G$ be an arbitrary element. Since $R(\overline{\varphi})=1$, there exists $x\in G$ such that $xg\varphi(x^{-1})\in N$. Subsequently, since $R(\varphi|_N)=1$, there exists $n\in N$ such that $nxg\varphi(x^{-1}n^{-1})=e$, as desired.
\end{proof}

\subsection{Borel subgroups of semisimple algebraic groups}\label{bor}

In this section we establish the algebraic $R_\infty$-property of Borel subgroups of semisimple algebraic groups. First, we observe the following useful result.
\begin{lemma}\label{tga}
If $G\cong T\ltimes U$, where $T= \mathbb{G}_m^n$ ($n\geq 1$) and $U= \mathbb{G}_a $,  then the following are equivalent:
\begin{enumerate}
\item There exists $\varphi\in \mathrm{Aut}_{\mathrm{alg}}(G)$ such that $\varphi(T)=T$ and $R(\varphi|_T)=1$.
\item $G$ is the direct product of $T$ and $U$.
\item $G$ fails to have the algebraic $R_\infty$-property.
\end{enumerate}
\end{lemma}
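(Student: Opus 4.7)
The plan is to prove the cyclic chain $(2) \Rightarrow (3) \Rightarrow (1) \Rightarrow (2)$, with the substance concentrated in the final implication.

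For $(2) \Rightarrow (3)$, I would assume $G = T \times U$ and construct an explicit automorphism with a single twisted conjugacy class. Specifically, choose $\varphi_T = \varphi_A \in \Aut(T)$ with $\det(A - \Id) \neq 0$ (for instance $A = -\Id$), so that $R(\varphi_T) = 1$ by \thmref{torus}. On $U \cong \mathbb{G}_a$, let $\varphi_U(u) = c u$ for any $c \in k^\times \setminus \{1\}$; then the orbit of $0$ under $\varphi_U$-twisted conjugation is $(1 - c) \mathbb{G}_a = \mathbb{G}_a$, so $R(\varphi_U) = 1$. Setting $\varphi := \varphi_T \times \varphi_U$, the argument of Case~1 in the proof of \thmref{solv} (first conjugate an arbitrary element into $U$, then into the identity) gives $R(\varphi) = 1$; thus $G$ fails the $R_\infty$-property.

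For $(3) \Rightarrow (1)$, the strategy is to invoke the contrapositive of \lemref{solvtorus}: if $G$ lacks the $R_\infty$-property, some $\varphi \in \Aut(G)$ satisfies $\varphi(T) = T$ and $R(\varphi|_T) < \infty$. Because $T$ is commutative, \lemref{unitorus} immediately sharpens this to $R(\varphi|_T) = 1$.

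The essential work is in $(1) \Rightarrow (2)$, and this is where I expect the main obstacle. Given $\varphi \in \Aut(G)$ with $\varphi(T) = T$ and $R(\varphi|_T) = 1$, first note that $U$, being the unipotent radical, is characteristic, so $\varphi(U) = U$; since every algebraic group automorphism of $\mathbb{G}_a$ is of the form $u \mapsto c u$ for some $c \in k^\times$, one has $\varphi|_U(u) = c u$. Let $\chi : T \to \mathbb{G}_m$ be the character through which $T$ acts on $U$, so that $t u t^{-1} = \chi(t) u$. Applying $\varphi$ to both sides and using $\varphi|_U = c \cdot \mathrm{id}$ collapses the identity to $\chi(t) = \chi(\varphi(t))$ for all $t \in T$; that is, $\chi$ is $\varphi|_T$-invariant. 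Identifying $X^*(T)$ with $\mathbb{Z}^n$ and $\varphi|_T$ with the matrix $A \in \GL_n(\mathbb{Z})$, the vector representing $\chi$ must lie in $\ker(A^T - \Id)$. But \thmref{torus} gives $\det(A - \Id) \neq 0$, whence $\det(A^T - \Id) \neq 0$, so that vector is $0$, forcing $\chi$ to be trivial and $G = T \times U$. The decisive observation is that compatibility of $\varphi$ with the semidirect-product bracket promotes the defining character $\chi$ to a fixed covector of $\varphi|_T$, which the hypothesis on $R(\varphi|_T)$ rules out unless $\chi$ is trivial.
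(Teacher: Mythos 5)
Your proof is correct, and the overall cyclic scheme $(2)\Rightarrow(3)\Rightarrow(1)\Rightarrow(2)$ coincides with the paper's (which runs $(1)\Rightarrow(2)\Rightarrow(3)\Rightarrow(1)$). The implication $(2)\Rightarrow(3)$ is essentially identical to the paper's. In $(3)\Rightarrow(1)$ you take a small shortcut: rather than first invoking \thmref{solv} to produce $\psi$ with $R(\psi)=1$ and then applying it again to pass to the torus, you apply the contrapositive of \lemref{solvtorus} directly and then sharpen $R(\varphi|_T)<\infty$ to $R(\varphi|_T)=1$ by \lemref{unitorus}; this is equivalent and arguably tidier. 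The genuine difference is in $(1)\Rightarrow(2)$. The paper works at the level of group elements: from $t\varphi(x)t^{-1}=\varphi(t)\varphi(x)\varphi(t^{-1})$ it deduces that the twisted orbit $[e]_{\varphi|_T}\subset T$ centralizes $U$, and since $R(\varphi|_T)=1$ means that orbit is all of $T$, the action is trivial. You instead push the same compatibility into the character lattice, obtaining $\chi\circ\varphi|_T=\chi$, i.e.\ the vector of $\chi$ lies in $\ker(A^{\mathsf T}-\Id)$, and then invoke the determinant criterion of \thmref{torus} to kill it. These are dual formulations of the same constraint (triviality of $\chi$ on $[e]_{\varphi|_T}$); the paper's version is more element-level and self-contained, while yours routes the hypothesis $R(\varphi|_T)=1$ through \thmref{torus} and so is closer to the linear-algebra formalism already set up there. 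Both are valid, and nothing is missing in your argument — you correctly note that $U$ is characteristic (being the unipotent radical), that $\Aut(\mathbb{G}_a)$ consists of scalings, and that $\det(A-\Id)\neq 0$ implies $\det(A^{\mathsf T}-\Id)\neq 0$.
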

\begin{proof}

 Let
 the action of $T$ on $U$ be given by $txt^{-1}=\alpha_t x$ for all $t\in T, x\in U$, where $t\mapsto \alpha_t$ is a homomorphism $T\rightarrow k^\times$.

 \emph{(1) $\Rightarrow$ (2)} Assume that there exists an automorphism $\varphi\in \mathrm{Aut}_{\mathrm{alg}}(G)$ such that $\varphi(T)=T$ and $R(\varphi|_T)=1$. Let $\beta\in k^\times$ be such that $\varphi|_U(x)=\beta x$ for all $x\in U$. Then for every $t\in T$ and $x\in U$, we have $$t\varphi(x)t^{-1}=\alpha_t(\beta x)=\beta (\alpha_t x)=\varphi(txt^{-1})=\varphi(t)\varphi(x)\varphi(t^{-1}).$$ Thus $t^{-1}\varphi(t)\varphi(x)\varphi(t^{-1})t=\varphi(x)$ for all $t\in T, x\in U$ which implies that every element of the $\varphi|_T$-conjugacy class of $e$ in $T$ commutes with every element of $U$. But then (since $R(\varphi|_T)=1$) $T$ centralizes $U$ and hence, $G$ is the direct product of $T$ and $U$. 
 
 \vspace*{2mm}

\emph{(2) $\Rightarrow$ (3)} Let $G=T\times U$. Consider $\varphi_1\in \mathrm{Aut}_{\mathrm{alg}}(T)$ and $\varphi_2\in \mathrm{Aut}_{\mathrm{alg}}(U)$ such that $R(\varphi_1)=R(\varphi_2)=1$. Such a $\varphi_1$ exists by Theorem \ref{torus}. Let $\varphi_2\in \mathrm{Aut}_{\mathrm{alg}}(U)$ be such that $\varphi_2(x)=\beta x$ for all $x\in U$, where $\beta\in k\setminus \{0,1\}$ is a fixed scalar. Then for any $y\in U$, the equality $y=\frac{y}{1-\beta}+\frac{-\beta y}{1-\beta}$, shows that $y$ is $\varphi_2$-conjugate to the identity element in $U$ and hence, $R(\varphi_2)=1$. Then one checks that $\varphi(tx)=\varphi_1(t)\varphi_2(x)$ for all $t\in T, x\in U$ defines an automorphism of $G$ and hence $R(\varphi)=1$ by Theorem \ref{solv} (Case 1).

\vspace*{2mm}

\emph{(3) $\Rightarrow$ (1)} If $G$ does not have the algebraic $R_\infty$-property, then by Theorem \ref{solv} there exists $\psi\in \mathrm{Aut}_{\mathrm{alg}}(G)$ such that $R(\psi)=1$. Let $g\in G$ be such that $\mathrm{Int}_g\psi(T)=T$ and set $\varphi=\mathrm{Int}_g\psi$. Then by Lemma \ref{inner} $R(\varphi)=R(\psi)=1$ and hence, from the  proof of Theorem \ref{solv}, it follows that $R(\varphi|_T)=1$.
\end{proof}

Now, let $G$ be a connected semisimple algebraic group, $T$ a maximal torus and $B$ a Borel subgroup containing $T$. Let $\Phi$ be the root system of $G$ associated to $T$, $\Delta$ the simple subsystem of $\Phi$ determined by $B$ and $\Gamma$ the group of all automorphisms of $\Phi$ stabilizing $\Delta$.

For every $\psi\in \mathrm{Aut}_{\mathrm{alg}}(B)$ there exists $b\in B$ such that $\mathrm{Int}_b\psi(T)=T$ (since $T$ is a maximal torus in $B$). Thus if $D'=\{\varphi\in \mathrm{Aut}_{\mathrm{alg}}(B): \varphi(T)=T\}$, then $\mathrm{Aut}_{\mathrm{alg}}(B)=\mathrm{Int}(B)D'$. Let $X(T)$ denote the character group of $T$, and $\mathrm{Aut}_{\mathbb{Z}}(X(T))$ denote the group of all automorphisms of $X(T)$. Observe that we have a homomorphism $D'\rightarrow \mathrm{Aut}_{\mathbb{Z}}(X(T))$ given by $\varphi\mapsto \overline{\varphi}$ for all $\varphi\in D'$, where $\overline{\varphi}(\alpha)(t)=\alpha\varphi^{-1}(t)$ for all $\alpha\in X(T), t\in T$. We \textbf{claim} that $\overline{\varphi}\in \Gamma$ for all $\varphi\in D'$. To see this, let for each root $\alpha\in \Phi$, $U_\alpha$ be the root subgroup associated to it. It is known that $U_\alpha$ can be characterised as the image of any injective homomorphism $\epsilon:\mathbb{G}_a\rightarrow G$ such that $t\epsilon(x)t^{-1}=\epsilon(\alpha(t)x)$, for all $t\in T,x\in \mathbb{G}_a$ (see \cite[Section 26.3]{hum}). So if $\beta\in \Phi^+$, and $\epsilon_\beta:\mathbb{G}_a\rightarrow U_\beta$ an associated isomorphism, then for every $t\in T, x\in \mathbb{G}_a$, we have $t\varphi(\epsilon_\beta(x)t^{-1}=\varphi\epsilon_\beta(\overline{\varphi}(\beta)(t)x)$. This shows that $\varphi(U_\beta)=U_{\overline{\varphi}(\beta)}\subset B$. Hence $\overline{\varphi}(\beta)\in \Phi^+$, whenever $\beta\in \Phi^+$. Now since $\overline{\varphi}(\Delta)$ is again a simple subsystem contained in $\Phi^+$ and $\Phi^+$ determines $\Delta$ uniquely, we conclude that $\overline{\varphi}(\Delta)=\Delta$. This proves the claim. 

Now, by the argument used in the proof of Theorem 27.4 in \cite{hum}, it follows that the kernel of the homomorphism $D'\rightarrow \Gamma$ is exactly equal to $D'\cap \mathrm{Int}(B)$. We summarize this discussion as 

\begin{lemma}\label{autborel1}
Let $B$ be a Borel subgroup of a connected semisimple algebraic group $G$.
Then 
\begin{enumerate}
\item $\mathrm{Aut}_{\mathrm{alg}}(B)=\mathrm{Int}(B)D'$, 
\item the kernel of the map $D'\rightarrow \Gamma$ is equal to $\mathrm{Int}(B)\cap D'$.
\end{enumerate}
\end{lemma}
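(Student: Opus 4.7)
The plan is to formalize the discussion that immediately precedes the lemma, splitting the argument cleanly into the two parts of the statement.

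For part (a), I would start with an arbitrary $\psi\in \Aut(B)$ and observe that $\psi(T)$ is again a maximal torus of $B$, because $\psi$ preserves the property of being a maximal connected diagonalizable subgroup. Since maximal tori in the connected solvable group $B$ are conjugate, there is $b\in B$ with $b\,\psi(T)\,b^{-1}=T$, equivalently $\Int_b\circ\psi\in D'$. Rewriting $\psi=\Int_{b^{-1}}\circ(\Int_b\circ\psi)$ gives $\Aut(B)=\Int(B)\,D'$.

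For part (b), my plan is to introduce the homomorphism $\Psi\colon D'\to \Aut_{\mathbb Z}(X(T))$ sending $\varphi\mapsto\overline\varphi$, with $\overline\varphi(\alpha)(t)=\alpha(\varphi^{-1}(t))$, and then verify successively that (i) its image lies in $\Gamma$, and (ii) $\ker\Psi=\Int(B)\cap D'$. Step (i) is essentially transcribed from the prose preceding the lemma: pass to the differential $d\varphi\colon\mathfrak b\to\mathfrak b$, use $\mathfrak g_\alpha\subset\mathfrak b$ for $\alpha\in\Phi^+$, and carry out the intertwining
\begin{equation*}
\Ad(t)\bigl(d\varphi(X)\bigr)=d\varphi\bigl(\Ad(\varphi^{-1}(t))X\bigr)=\alpha(\varphi^{-1}(t))\,d\varphi(X)=\overline\varphi(\alpha)(t)\,d\varphi(X)
\end{equation*}
for $X\in\mathfrak g_\alpha$. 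This shows $d\varphi(\mathfrak g_\alpha)\subset\mathfrak g_{\overline\varphi(\alpha)}\cap\mathfrak b$, so $\overline\varphi(\Phi^+)\subset\Phi^+$; since the base $\Delta$ is uniquely determined by $\Phi^+$, we conclude $\overline\varphi(\Delta)=\Delta$, i.e.\ $\overline\varphi\in\Gamma$.

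For step (ii), the forward inclusion uses the fact that in a Borel subgroup $N_B(T)=T$ (standard: representatives in $N_G(T)$ of nontrivial Weyl elements do not lie in $B$), so $\Int(B)\cap D'=\{\Int_t:t\in T\}$, and each $\Int_t$ acts trivially on $X(T)$ since $T$ is commutative. Conversely, if $\varphi\in\ker\Psi$ then $\varphi|_T=\Id$ (characters separate points of the torus), and since $\overline\varphi$ fixes every root, $\varphi$ preserves each simple root subgroup $U_\alpha$, so $\varphi(x_\alpha(t))=x_\alpha(c_\alpha t)$ for suitable $c_\alpha\in k^\times$, $\alpha\in\Delta$. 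Using the surjectivity of $T\to(k^\times)^{\Delta}$, $t\mapsto(\alpha(t))_{\alpha\in\Delta}$ (a consequence of $\Delta$ being a basis of $X(T)\otimes_{\mathbb Z}\mathbb Q$), choose $t_0\in T$ with $\alpha(t_0)=c_\alpha^{-1}$ for all $\alpha\in\Delta$; then $\Int_{t_0}\circ\varphi$ fixes $T$ pointwise and each $x_\alpha(t)$ for $\alpha\in\Delta$, which together generate $B$ via the Chevalley commutator formula \eqref{commutator}. Hence $\Int_{t_0}\circ\varphi=\Id$, so $\varphi\in\Int(B)\cap D'$, exactly the argument of \cite[Theorem 27.4]{hum}.

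The main obstacle is the kernel computation in (b): both showing $N_B(T)=T$ cleanly enough that $\Int(B)\cap D'$ can be identified with $\{\Int_t:t\in T\}$, and carrying out the rigidity reduction that turns a $D'$-element with trivial $\overline\varphi$ into an inner automorphism. The rest of the proof is bookkeeping around the standard Borel/Chevalley setup already fixed in \secref{prel}.
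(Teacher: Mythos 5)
Your proposal is correct and follows essentially the same approach as the paper: the conjugacy of maximal tori in $B$ gives part (a), the differential calculation gives $\overline\varphi\in\Gamma$, and the kernel identification is the argument of \cite[Theorem 27.4]{hum}, which the paper simply cites while you spell it out (via $N_B(T)=T$ and the rigidity reduction through the simple root groups).
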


\begin{corollary}\label{autborel2}
Let $G,B,T$ and $D'$ be as above. Assume further that $G$ is of simply connected or adjoint type. Then for every $\varphi\in D'$ there exists $\psi\in \mathrm{Aut}_{\mathrm{alg}}(G)$ such that $\varphi=\psi|_B$. 
\end{corollary}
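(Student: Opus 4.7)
The plan is to combine Lemma \ref{autborel1}(b) with the existence of a canonical section $\Gamma \to \Aut(G)$ that is available precisely when $G$ is simply connected or adjoint. Concretely, for such $G$ the short exact sequence
\[
1 \longrightarrow \Int(G) \longrightarrow \Aut(G) \longrightarrow \Aut(G)/\Int(G) \cong \Gamma \longrightarrow 1
\]
splits, and one may choose a splitting $\sigma : \Gamma \to \Aut(G)$ whose image consists of the graph automorphisms attached to a chosen pinning $(T, B, \{x_\alpha(1)\}_{\alpha\in\Delta})$. In particular each $\sigma(\gamma)$ stabilises both $T$ and $B$, and the induced action on $X(T)$ is $\gamma$ itself. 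I would invoke this as a classical consequence of Chevalley's isomorphism theorem (see Steinberg \cite{St2}).

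Given $\varphi \in D'$, form its image $\overline{\varphi} \in \Gamma$ and set $\psi_0 = \sigma(\overline{\varphi}) \in \Aut(G)$. Since $\psi_0(T) = T$ and $\psi_0(B) = B$, the restriction $\psi_0|_B$ lies in $D'$, and by the compatibility of $\sigma$ with the action on $X(T)$ one has $\overline{\psi_0|_B} = \overline{\varphi}$ in $\Gamma$. Hence $\varphi \circ (\psi_0|_B)^{-1} \in D'$ lies in the kernel of the homomorphism $D' \to \Gamma$, which by Lemma \ref{autborel1}(b) equals $\Int(B) \cap D'$. Therefore $\varphi \circ (\psi_0|_B)^{-1} = \Int_b|_B$ for some $b \in B$, and taking $\psi := \Int_b \circ \psi_0 \in \Aut(G)$ gives $\psi(B) = B$ and $\psi|_B = \Int_b|_B \circ \psi_0|_B = \varphi$, as required.

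The main obstacle is the first step: producing a section $\sigma$ whose image preserves $T$ and $B$. This is exactly where the simply connected or adjoint hypothesis enters—for these isogeny types a pinning can be fixed and the graph automorphisms defined at the level of the full group $G$, giving a canonical splitting of the automorphism sequence. For intermediate isogeny types one still has $\Aut(G)/\Int(G) \cong \Gamma$, but an actual lift in $\Aut(G)$ with the required stability on $T$ and $B$ need not exist. Once this ingredient is granted, the remainder of the argument is a short diagram chase on the sequence identified in Lemma \ref{autborel1}.
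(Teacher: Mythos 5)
Your argument is correct and is essentially the paper's own proof: the paper likewise lifts $\overline{\varphi}\in\Gamma$ to an automorphism $\rho\in\Aut(G)$ stabilising $B$ and $T$, then uses Lemma \ref{autborel1}(b) to see that $\rho|_B$ and $\varphi$ differ by $\Int_b$ for some $b\in B$, and corrects by an inner automorphism. You just phrase the lifting step as the existence of a pinned splitting $\sigma:\Gamma\to\Aut(G)$, whereas the paper states only that the map $D\to\Gamma$ is onto; these are the same fact, and the rest of the bookkeeping is identical.
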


\begin{proof}
If $D=\{\varphi\in \mathrm{Aut}_{\mathrm{alg}}(G):\varphi(B)=B,\varphi(T)=T\}$, then the natural map $D\rightarrow \Gamma$ is onto (c.f. \cite[Theorem 25.16]{kmrt}). Let $\varphi\in D'$ be arbitrary and consider its image $\overline{\varphi}\in \Gamma$. If  $\rho\in D$ is a preimage of $\overline{\varphi}$,  then by Lemma \ref{autborel1} we conclude that there exists $b\in B$ such that $\rho|_B=\mathrm{Int}_b\varphi$. Setting $\psi=\mathrm{Int}_{b^{-1}}\rho$, we get $\psi|_B=\varphi$.

\end{proof}

\begin{theorem}\label{borel}
If $B$ is a Borel subgroup of a connected semisimple algebraic group $G$, then $B$ has the algebraic $R_\infty$-property.
\end{theorem}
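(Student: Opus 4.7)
The plan is to reduce $\varphi$ to the quotient $B/[U,U]$ and show that the induced automorphism there already has infinitely many twisted conjugacy classes. By \lemref{inner} together with the conjugacy of maximal tori in $B$, we may replace $\varphi\in\Aut(B)$ by an inner twist so that $\varphi(T)=T$, i.e., $\varphi\in D'$. The commutator $[U,U]$ is a closed characteristic subgroup of $U$, hence normal in $B$ and stable under $\varphi$. Set $V=U/[U,U]$; as a $T$-module, $V\cong \bigoplus_{\alpha\in\Delta} U_\alpha$ with $T$ acting on $U_\alpha$ via the character $\alpha$, and $B/[U,U]\cong T\ltimes V$. By \lemref{ses}, $R(\varphi)\ge R(\widetilde\varphi)$ for $\widetilde\varphi$ the automorphism induced on $T\ltimes V$, so it suffices to prove $R(\widetilde\varphi)=\infty$.

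Assume for contradiction that $R(\widetilde\varphi)=1$. A direct computation in the semidirect product shows that $(b,w)\in T\ltimes V$ is $\widetilde\varphi$-equivalent to the identity exactly when there exist $t\in T$ and $v\in V$ with $b=t\,\varphi_T(t)^{-1}$ and $w = v - b\cdot \varphi_V(v)$, where $\varphi_T=\widetilde\varphi|_T$, $\varphi_V=\widetilde\varphi|_V$, and the dot denotes the $T$-action on $V$. Projecting onto $T$ forces $R(\varphi_T)=1$; the remaining condition is that the endomorphism $I - L_b\circ\varphi_V$ of the finite-dimensional $k$-vector space $V$ must be surjective for every $b\in T$, where $L_b(v):=b\cdot v$.

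To exhibit a $b$ for which this fails, one uses the structural fact that any $\psi\in D'$ sends each $U_\alpha$ onto $U_{\overline\psi(\alpha)}$ by an isomorphism $x_\alpha(s)\mapsto x_{\overline\psi(\alpha)}(c_\alpha s)$ with $c_\alpha\in k^\times$. Consequently $L_b\circ\varphi_V$ acts on the natural basis of $V$ as a weighted cyclic permutation along each $\overline\varphi$-orbit in $\Delta$. Writing $V_{\mathcal O}=\bigoplus_{\alpha\in\mathcal O}U_\alpha$ for a $\overline\varphi$-orbit $\mathcal O=\{\alpha_1,\dots,\alpha_m\}\subset\Delta$, a short circulant determinant calculation gives
\[
\det\bigl((I - L_b\circ\varphi_V)|_{V_{\mathcal O}}\bigr) \;=\; 1 - C_{\mathcal O}\,\chi_{\mathcal O}(b),\qquad C_{\mathcal O}:=\prod_{\alpha\in\mathcal O}c_\alpha,\quad \chi_{\mathcal O}:=\sum_{\alpha\in\mathcal O}\alpha.
\]
Since $G$ is semisimple, $\Delta$ is $\mathbb Q$-linearly independent in $X(T)\otimes\mathbb Q$, so $\chi_{\mathcal O}\in X(T)$ is a nontrivial character of $T$; nontrivial characters of a torus are surjective onto $k^\times$, so there is $b\in T$ with $\chi_{\mathcal O}(b)=C_{\mathcal O}^{-1}$, at which the displayed determinant vanishes. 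Hence $I-L_b\circ\varphi_V$ fails to be surjective, contradicting $R(\widetilde\varphi)=1$. Applying \thmref{solv} to $B/[U,U]$ then yields $R(\widetilde\varphi)=\infty$, and so $R(\varphi)=\infty$.

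The main point requiring care is the structural assertion that every $\varphi\in D'$ permutes the root subgroups via $\overline\varphi$ by scalars; this relies on $U_\alpha$ being the unique one-dimensional closed connected $T$-stable subgroup of $U$ on which $T$ acts via the character $\alpha$, together with the fact that any algebraic group automorphism of $\mathbb G_a$ is multiplication by a nonzero constant. Once that is in hand, the rest is routine bookkeeping with block-circulant determinants and the surjectivity of nontrivial characters on a torus.
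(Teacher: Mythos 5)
Your proof is correct, but it takes a genuinely different route from the paper's. Both arguments begin identically: use \lemref{inner} and conjugacy of maximal tori to reduce to $\varphi\in D'$. From there the paper (i) first reduces to adjoint type via the isogeny $G\to G_{\mathrm{ad}}$ and \lemref{ses}; (ii) decomposes $G$ into simple factors; (iii) lifts $\varphi$ to an automorphism of $G$ via \corref{autborel2}; (iv) uses the highest root (which is fixed by every diagram symmetry) to manufacture a one--dimensional $T$--stable subgroup $N\subset R_u(B)$ on which $\mathrm{Int}_t\varphi$ acts trivially; and (v) feeds $T\ltimes N$ into \lemref{tga} to get a contradiction. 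You instead pass directly to the abelianization $B/[U,U]\cong T\ltimes V$ with $V\cong\bigoplus_{\alpha\in\Delta}U_\alpha$, invoke \lemref{ses}, and then do a completely explicit calculation: after verifying $\widetilde\varphi(t,v)=(\varphi_T(t),\varphi_V(v))$, you reduce transitivity to surjectivity of $I-L_b\circ\varphi_V$ for all $b\in T$, observe that $\varphi_V$ is a weighted permutation along $\overline\varphi$--orbits in $\Delta$, and kill one circulant block by choosing $b$ so that $\chi_{\mathcal O}(b)=C_{\mathcal O}^{-1}$ (using that $\chi_{\mathcal O}=\sum_{\alpha\in\mathcal O}\alpha\neq 0$ because $\Delta$ is linearly independent, and that nontrivial characters of a torus are onto $k^\times$). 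Your route avoids the adjoint reduction, the decomposition into simple factors, \corref{autborel2}, and \lemref{tga} altogether, and it treats an arbitrary connected semisimple $G$ in one stroke; it does rely on the standard identification $U/[U,U]\cong\bigoplus_{\alpha\in\Delta}U_\alpha$ as a $T$--module, which holds in all characteristics, and on the structural fact (essentially the content of the discussion preceding \lemref{autborel1}) that $\varphi\in D'$ carries $U_\alpha$ onto $U_{\overline\varphi(\alpha)}$ by a nonzero scalar. The paper's extra machinery would become necessary for a finer statement about which $\varphi$ have $R(\varphi)=1$ on a proper subquotient, but for the theorem as stated your argument is tighter.
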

\begin{proof}
First we assume that $G$ is of adjoint type. Let $G=G_1^{n_1}\times\dots\times G_l^{n_l}$ where $G_i$'s are simple algebraic groups, with $G_i\ncong G_j$ for all $i\neq j$. We fix a maximal torus $T_i\subset G_i$ and a Borel subgroup $B_i$ containing $T_i$ for every $1\leq i\leq l$. Consider the Borel subgroup $B=B_1^{n_1}\times\dots\times B_l^{n_l}$ of $G$ containing the torus $T=T_1^{n_1}\times\dots\times T_l^{n_l}$. It suffices to prove the theorem for this chosen $B$ (by virtue of the conjugacy of all Borel subgroups in $G$). Let $D'$ and $\Gamma$ be as in Lemma \ref{autborel1} and note that in view of Lemma \ref{autborel1} and Lemma \ref{inner}, it is enough to prove that $R(\varphi)=\infty$ for all $\varphi\in D'$. So let $\varphi\in D'$ and by Corollary \ref{autborel2}, we find an element $\psi\in \mathrm{Aut}_{\mathrm{alg}}(G)$ such that $\psi|_B=\varphi$. Note that $\psi(G_i^{n_i})=G_i^{n_i}$ and hence $\psi(B_i^{n_i})=B_i^{n_i}$ and $\psi(T_i^{n_i})=T_i^{n_i}$ for all $1\leq i\leq l$. This in turn implies that $B_i^{n_i}$ and $T_i^{n_i}$ are invariant under $\varphi$ for all $1\leq i\le l$.

\vspace*{4mm}

\noindent\textbf{Claim:} The unipotent radical of $B$ contains a connected one dimensional subgroup which is invariant under $\mathrm{Int}_t\varphi$, for some suitable $t\in T$.

\vspace*{3mm}
\noindent\textbf{Proof of claim:} Fix an $i\in \{1,\ldots,l\}$. Let $\Phi_i$ (respectively, $\Delta_i$) be the root system (respectively, simple subsystem) of $G_i$ determined by $T_i$ (respectively, $B_i$). 

By virtue of Lemma \ref{autsimple}, we identify $(\mathrm{Aut}_{\mathrm{alg}}(G_i))^{n_i}$ and $S_{n_i}$ as subgroups of that $\mathrm{Aut}_{\mathrm{alg}}(G_i^{n_i})$, and write $(\mathrm{Aut}_{\mathrm{alg}}(G_i))^{n_i}=(\mathrm{Aut}_{\mathrm{alg}}(G_i))^{n_i}S_{n_i}$. If $f_1,\ldots,f_{n_i}\in \mathrm{Aut}_{\mathrm{alg}}(G_i)$ and $\sigma\in S_{n_i}$, then the action of $(f_1,\ldots,f_{n_i})\sigma$ on $G_i^{n_i}$ is given by $$((f_1,\ldots,f_{n_i})\sigma)(g_1,\ldots,g_{n_i})=(f_1(g_{\sigma^{-1}(1)}),\ldots,f_{n_i}(g_{\sigma^{-1}(n_i)})),$$ for all $(g_1,\ldots,g_{n_i})\in G_i^{n_i}$. So if $\psi_i=\psi|_{G_i^{n_i}}$, then there exists $\sigma_i\in S_{n_i}\subset \mathrm{Aut}_{\mathrm{alg}}(G_i^{n_i})$ such that $\psi_i\sigma_i=(\psi_{i1},\ldots,\psi_{in_i})$, where $\psi_{i1},\ldots,\psi_{in_i}\in \mathrm{Aut}_{\mathrm{alg}}(G_i)$, and since $\psi_i\sigma_i$ leaves $B_i^{n_i}$ and $T_i^{n_i}$ invariant, it follows that $\psi_{ij}(B_i)=B_i$ and $\psi_{ij}(T_i)=T_i$ for all $1\leq j\leq n_i$. Hence each of the automorphisms $\psi_{ij}$ is induced by an automorphism (say) $\gamma_j$ of $\Phi_i$, which maps $\Delta_i$ to itself. Let $\alpha_{i}$ be the highest root in $\Phi_i^+$ determined by $\Delta_i$. Then $\gamma_j(\alpha_{i})=\alpha_{i}$ for all $1\leq j\leq n_i$. Therefore, if $U_{\alpha_{i}}$ is the root subgroup of $G_i$ associated to $\alpha_{i}$, then $\psi_{ij}(U_{\alpha_{i}})=U_{\alpha_{i}}$ for all $1\leq j\leq n_i$. Consider an isomorphism $\epsilon_{\alpha_i}:\mathbb{G}_a\rightarrow U_{\alpha_i}$ such that $t\epsilon_{\alpha_i}(x)t^{-1}=\epsilon_{\alpha_i}(\alpha_i(t)x)$, for all $t\in T_i, x\in \mathbb{G}_a$ and for each $j=1,\ldots,n_i$, let $c_j\in k^\times$ be such that $\psi_{ij}(\epsilon_{\alpha_i}(x))=\epsilon_{\alpha_i}(c_jx)$, for all $x\in \mathbb{G}_a$. Also, for each $j\in \{1,\ldots,n_i\}$, we can find $t_{ij}\in T_i$ such that $\alpha_i(t_{ij})=c_j^{-1}$ and hence $\mathrm{Int}_{t_{ij}}\psi_{ij}$ is identity on $U_{\alpha_i}$.
Consider the homomorphism $\theta_i:\mathbb{G}_a\rightarrow G_i^{n_i}$ defined by $\theta_i(x)=(\epsilon_{\alpha_i}(x),\ldots,\epsilon_{\alpha_i}(x))$, for all $x\in \mathbb{G}_a$.

 Now define $\theta:\mathbb{G}_a\rightarrow G$ by setting $\theta(x)=(\theta_1(x),\ldots,\theta_l(x))$, for all $x\in \mathbb{G}_a$. Note that $\theta $ is an isomorphism onto its image. So, if $N:=\theta(\mathbb{G}_a)$, $t_i:=(t_{i1},\ldots,t_{in_i})\in T_i^{n_i}$ ($1\leq i\leq l$) and $t:=(t_1,\ldots,t_l)\in T$, then we check that $\mathrm{Int}_{t}\varphi=(\mathrm{Int}_{t}\psi)|_B$ is identity on $N$. This proves the claim.

\vspace*{3mm}

\noindent Now assume that $R(\mathrm{Int}_{t}\varphi)=1$. Then by Lemma \ref{solvtorus} and Lemma \ref{unitorus},  $R(\mathrm{Int}_{t}\varphi|_T)=1$ and by the above claim $\mathrm{Int}_{t}\varphi$ stabilizes $T\ltimes N$. Therefore by Lemma \ref{tga} $T$ centralizes $N$, a contradiction. Thus $R(\varphi)=R(\mathrm{Int}_{t}\varphi)=\infty$ and this proves the theorem when $G$ is of adjoint type.

For the general case, let $G$ be any semisimple algebraic group with a Borel subgroup $B$. Consider the semisimple group $G_{ad}$ (of adjoint type) isogenous to $G$, via the adjoint homomorphism $\mathrm{Ad}:G\rightarrow G_{\mathrm{ad}}$. Then we have an exact sequence 
\[\xymatrix{e\ar[r]&Z(G)\ar[r]&G\ar[r]^{\mathrm{Ad}}&G_{ad}\ar[r]&e}.\] 

Now $Z(G)=Z(B)$ and $B_{ad}=\mathrm{Ad}(B)$ is a Borel subgroup of $G_{ad}$. Hence we have an exact sequence \[\xymatrix{e\ar[r]&Z(B)\ar[r]&B\ar[r]^{\mathrm{Ad}}&B_{ad}\ar[r]&e}.\] Since  $B_{ad}$ has the algebraic $R_\infty$-property, and $Z(B)$ is invariant under every automorphism of $B$, by Lemma \ref{ses} we conclude that $B$ has the algebraic $R_\infty$-property.

This completes the proof.
\end{proof}
\subsection{Maximal unipotent subgroups of simple algebraic groups}\label{uni}
In this section assume that $k$ is an algebraically closed field of characteristic different from $2$ and $3$. Let $G$ be a simple algebraic group over $k$, $B$  a Borel subgroup of $G$ and $U$ the unipotent radical of $B$. Note that $U$ is a maximal unipotent subgroup of $G$. It has been observed in \cite[Proposition 21]{bb} that $U$ admits an automorphism $\varphi$ for which $R(\varphi)=1$. We proceed to deduce a necessary and sufficient condition for the $\varphi$-conjugacy action to be transitive. This in turn will give a characterization of $\varphi$ for which $R(\varphi)=\infty$ (by virtue of Lemma \ref{unitorus}).
From the works of Gibbs \cite{gibbs} and Fauntleroy \cite{faunt}, we know that $\mathrm{Aut}_{\mathrm{alg}}(U)$ is generated by the inner automorphisms along with four other types of automorphisms, which we briefly describe below.

We view the group $G$ as a Chevalley group of type $\Phi$ based on $k$. Let $\Delta:=\{\alpha_1,\ldots, \alpha_l\}$ be the set of simple roots (where $l$ is the rank of $G$) and $\Phi^+:=\{\alpha_1,\ldots, \alpha_N\}$ the set of all positive roots. Every $\alpha\in \Phi^+$ can be uniquely written as $\alpha=\sum_{i=1}^ln_i\alpha_i$, where $n_i\in \mathbb{N}\cup\{0\}$. The \emph{height} of $\alpha$ is defined as  $\mathrm{ht}(\alpha):=\sum_{i=1}^ln_i$. Let $\alpha_N$ denote the unique root of maximum height in $\Phi^+$. Assume that  of $\Phi^+$ is endowed with an ordering : $\alpha_1<\alpha_2<\cdots<\alpha_{N-2}<\alpha_{N-1} <\alpha_N$, where $\mathrm{ht}(\alpha_i)\leq \mathrm{ht}(\alpha_j)$ if $\alpha_i<\alpha_j$. 
 
 If $\Phi$ is of the type $A_l$ ($l\geq 2$), then there are exactly two simple roots $\alpha_1, \alpha_l$ such that $\alpha_N-\alpha_1, \alpha_N-\alpha_l\in \Phi^+$. In this case assume that $\alpha_{N-2}=\alpha_N-\alpha_l$ and $\alpha_{N-1}=\alpha_N-\alpha_1$.
 If $\Phi$ is \emph{not} of the type $A_l$, then there is a unique $\alpha_i\in \Delta$ such that $\alpha_N-\alpha_i\in \Phi^+$. 
 In this case we assume that $\alpha_{N-1}=\alpha_N-\alpha_i$. Furthermore, if $\Phi$ is of type $C_l$ ($l\geq 3$), then $ \alpha_N-\alpha_i$ and $\alpha_N-2\alpha_{i}$ are in $\Phi^+$. Hence in this case we assume that $\alpha_{N-2}=\alpha_N-2\alpha_{i}$ and $\alpha_{N-1}=\alpha_N-\alpha_i$
 
 Now let $U=\langle x_\alpha(t):\alpha\in \Phi^+, t\in k\rangle$. We consider the following automorphisms of $U$.

\noindent\textbf{\textit{Extremal automorphisms:}}
 For every $u\in k$ there exists an automorphism $\varphi_{u}:U\rightarrow U$ such that $\varphi_{u}(x_{\alpha_j}(t))=x_{\alpha_j}(t)$ for all $\alpha_j\neq\alpha_i$
 and 
 \begin{align}\label{exal}
 \varphi_{u}(x_{\alpha_i}(t))=x_{\alpha_i}(t)x_{\alpha_N-\alpha_i}(ut)x_{\alpha_N}(\lambda_i ut^2),
 \end{align}
 where $\alpha_i\in \Delta$ is such that $\alpha_N-\alpha_i\in \Phi^+$.
\vspace*{2mm}

\noindent Furthermore, if $\Phi$ is of type $C_l$, then for every $u'\in k$ there exists an automorphism $\psi_{u'}:U\rightarrow U$ such that $\psi_{u'}(x_{\alpha_j}(t))=x_{\alpha_j}(t)$ for all $\alpha_j\neq\alpha_i$ 
and 
\begin{align}\label{exa2}
\psi_{u'}(x_{\alpha_i}(t))=x_{\alpha_i}(t)x_{\alpha_N-2\alpha_i}(u't)x_{\alpha_N-\alpha_i}(\mu_i u't^2)x_{\alpha_N}(\nu_i u't^3),
\end{align}
where $\alpha_i\in \Delta$ is such that $\alpha_N-\alpha_i\in \Phi^+$.
\vspace*{2mm}
 
 In the above formulas \eqref{exal} and \eqref{exa2},
  $\lambda_i=\frac{1}{2}c_{11}, \mu_i=\frac{1}{2}c_{11}, \nu_i=\frac{1}{3}c_{12}$ and $c_{ij}\in\{\pm 1,\pm 2, \pm 3\}$. These integers are coming from the Chevalley's commutator formula (\ref{commutator}).
  If $\Phi$ is neither of type $A_l$ nor of type $C_l$, then the extremal automorphisms of $U$ are defined as the elements of the set $\{\varphi_u:u\in k\}$. This forms a subgroup (isomorphic to the additive group of $k$) of $\mathrm{Aut}_{\mathrm{alg}}(U)$. If $\Phi$ is of type either $A_l$ ($l\geq 3$) or $C_l$, then the subgroup of $\mathrm{Aut}_{\mathrm{alg}}(U)$ generated by the extremal automorphisms of $U$, is isomorphic to the direct product of two copies of the additive group of $k$.

  Next, we determine the action of an extremal automorphism on an element of $U$. So let $\prod\limits_{j=1}^Nx_{\alpha_j}(t_j)$ be any arbitrary element of $U$ ($t_j\in k$).
  
  If $\Phi$ is of type $A_l$ ($l\geq 3$), then we have 
  \begin{align*}
  \varphi_u\varphi_{u'}\left(\prod\limits_{j=1}^Nx_{\alpha_j}(t_j)\right)
  =& \left(\prod\limits_{j=1}^{N-3}x_{\alpha_j}(t_j)\right)
   x_{\alpha_{N-2}}(t_{N-2}+u't_l)\\
   & x_{\alpha_{N-1}}(t_{N-1}+ut_1) x_{\alpha_N}(t_N+u\lambda_1 t_1^2+u'\lambda_l t_l^2),
  \end{align*}
  for all $u,u'\in k$.
  
  If $\Phi$ is of type $C_l$, and $\alpha_i\in\Delta$ is the unique simple root such that $\alpha_N-\alpha_i\in \Phi^+$, then we have
  \begin{align*}
  \varphi_u\psi_{u'}\left(\prod\limits_{j=1}^Nx_{\alpha_j}(t_j)\right)
 =&\left(\prod\limits_{j=1}^{N-3}x_{\alpha_j}(t_j)\right)x_{\alpha_{N-2}}(t_{N-2}+u't_i)\\
 &x_{\alpha_{N-1}}(t_{N-1}+ut_i+u'\mu_i t_i^2) x_{\alpha_N}(t_N+u\lambda_it_i^2+u'\nu_i t_i^3),
  \end{align*}
   for any $u,u'\in k$.
  
  If $\Phi$ is not of type $A_l$ or $C_l$, and $\alpha_i\in\Delta$, the unique simple root such that $\alpha_N-\alpha_i\in \Phi^+$, then 
  \begin{align*}
  \varphi_u\left(\prod\limits_{j=1}^Nx_{\alpha_j}(t_j)\right)=\left(\prod\limits_{j=1}^{N-2}x_{\alpha_j}(t_j)\right) x_{\alpha_{N-1}}(t_{N-1}+ut_i) x_{\alpha_N}(t_N+u\lambda_i t_i^2),
  \end{align*}
  for any $u\in k$.
  


 
\noindent\textbf{\textit{Central automorphisms:}} Let  $g_1,\ldots,g_l$ be endomorphisms of the additive group of $k$. The map $\varphi_C:U\to U$ defined by \[\varphi_{C}\left(\prod\limits_{j=1}^Nx_{\alpha_j}(t_j)\right)=\left(\prod\limits_{j=1}^{N-1}x_{\alpha_j}(t_j)\right)x_{\alpha_N}(t_N+\sum_{j=1}^lg_j(t_j)),\] for all $t_j\in k$, is called a central automorphism of $U$.

\noindent\textbf{\textit{Graph automorphisms:}} Given any $\rho\in \Gamma$ (the group of Dynkin diagram symmetries), the graph automorphism of $U$ (associated to $\rho$) is defined as the map $\varphi_{\rho}:U\rightarrow U$ given by 
\[\varphi_{\rho}\left(\prod\limits_{j=1}^Nx_{\alpha_j}(t_j)\right)=\prod\limits_{j=1}^Nx_{\rho(\alpha_j)}(t_j),\] for all $t_j\in k$.

\noindent\textbf{\textit{Diagonal automorphisms:}}
Let $P:=\mathbb{Z}\langle\Phi\rangle$ be the root lattice and $\chi: P\to k^{\times}$ a character.  The diagonal automorphism $\varphi_\chi$ of $U$ is defined by \[\varphi_{\chi}\left(\prod\limits_{j=1}^Nx_{\alpha_j}(t_j)\right)=\prod\limits_{j=1}^Nx_{\alpha_j}(\chi(\alpha_j)t_j),\] for all $t_j\in k$.

We are now in a position to state an important result about automorphisms of $U$ due to Fauntleroy \cite[Theorem 2.8]{faunt} and Gibbs \cite[Theorem 6.2]{gibbs}. 
\begin{lemma}\label{gibbs}
If $\varphi\in \mathrm{Aut}_{\mathrm{alg}}(U)$, then $\varphi=\varphi_{\rho} \varphi_{\chi} \varphi_{\omega}\varphi_C\mathrm{Int}_g$, where $\mathrm{Int}_g$ is an inner automorphism defined by $g\in U$, $\varphi_C$ is a central automorphism, $\varphi_{\omega}$ is an extremal automorphism,  $\varphi_{\chi}$ is a diagonal automorphism and $\varphi_{\rho}$ is a graph automorphism. 
\end{lemma}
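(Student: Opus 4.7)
The strategy is a top-down reduction through the filtration $U = U^{(1)} \supseteq U^{(2)} \supseteq \cdots \supseteq U^{(N)}$ defined by $U^{(m)} := \langle x_\alpha(t): \mathrm{ht}(\alpha) \geq m\rangle$. Each $U^{(m)}$ coincides (up to reindexing) with the $m$-th term of the lower central series of $U$, hence is characteristic: any $\varphi \in \Aut(U)$ satisfies $\varphi(U^{(m)}) = U^{(m)}$. Each successive quotient $U^{(m)}/U^{(m+1)}$ is elementary abelian with a distinguished basis indexed by the roots of height exactly $m$. The plan is to peel off factors of each of the five prescribed types by working up this filtration.

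\emph{Graph and diagonal factors.} The induced map $\bar\varphi$ on $U/U^{(2)} \cong \bigoplus_{i=1}^{l} U_{\alpha_i}$ must permute the simple-root summands in a way that preserves the Cartan integers, since these are encoded in the commutator pairing $U/U^{(2)} \times U/U^{(2)} \to U^{(2)}/U^{(3)}$ via formula \eqref{commutator}. Hence the permutation is a Dynkin diagram symmetry $\rho$, and replacing $\varphi$ by $\varphi_\rho^{-1} \circ \varphi$ we may assume $\varphi(U_{\alpha_i}) \subseteq U_{\alpha_i} \cdot U^{(2)}$ for every $i$. The induced map on each $U_{\alpha_i} \cong \mathbb{G}_a$ is then an algebraic automorphism, hence scaling by some $c_i \in k^\times$; setting $\chi(\alpha_i) = c_i$ and extending to a character $\chi: P \to k^\times$, composing with $\varphi_\chi^{-1}$ reduces us to the case $\varphi(x_{\alpha_i}(t)) \equiv x_{\alpha_i}(t) \imod{U^{(2)}}$ for all $i, t$.

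\emph{Inner, extremal and central factors.} Write $\varphi(x_{\alpha_i}(t)) = x_{\alpha_i}(t) \cdot v_i(t)$ with $v_i(t) \in U^{(2)}$. Inductively on $m = 2, 3, \ldots, N-2$, I would choose conjugators $g_m \in U^{(m-1)}$ so that $\Int_{g_m} \circ \varphi$ kills the $\gamma$-components of $v_i(t)$ at height $\mathrm{ht}(\gamma) = m$; the required $g_m$ exists because conjugation by $x_\beta(s)$ with $\mathrm{ht}(\beta) = m-1$ acts on $U^{(m)}/U^{(m+1)}$ through the bilinear commutator formula \eqref{commutator}, which is rich enough to realize every desired correction. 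After this telescoping, the residual automorphism differs from the identity only in the top two layers $U^{(N-1)}$ and $U^{(N)}$. Writing its $U^{(N-1)}/U^{(N)}$-contribution to $\varphi(x_{\alpha_i}(t))$ as $x_{\alpha_N - \alpha_i}(f_i(t))$ (and, in type $C_l$, also $x_{\alpha_N - 2\alpha_i}(h_i(t))$) and imposing the group-law identity $\varphi(x_{\alpha_i}(t+s)) = \varphi(x_{\alpha_i}(t))\varphi(x_{\alpha_i}(s))$, the Chevalley commutator formula forces $f_i$ (respectively $h_i$) to be additive, and forces the $U^{(N)}$-correction to contain a quadratic (respectively cubic) term with coefficient $\lambda_i$ (respectively $\mu_i, \nu_i$). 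This exhibits the residual map as extremal; the leftover freedom is precisely an additive endomorphism of $k$ contributed into $U_{\alpha_N} = Z(U)$ from each simple root, which is exactly the data of a central automorphism $\varphi_C$.

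\emph{Main obstacle.} The principal difficulty lies in this final step: verifying that compatibility with the group law and the Chevalley commutator relations admits no deformations beyond the extremal and central families. The case split by Dynkin type is dictated by which of $\alpha_N - \alpha_i$ and $\alpha_N - 2\alpha_i$ lie in $\Phi^+$ — type $A_l$ has two endpoints of the diagram yielding two candidates, type $C_l$ contributes the cubic correction via $\psi_{u'}$ because $\alpha_N - 2\alpha_i \in \Phi^+$, and the remaining types present a single index. The characteristic hypothesis is essential because the coefficients $\lambda_i, \mu_i, \nu_i$ are defined with denominators $2$ and $3$ (arising from the Taylor expansion $\frac{1}{2} c_{11} t^2$, etc., in the additivity calculation above); in characteristic $2$ or $3$ additional exotic automorphism families intervene and \lemref{gibbs} requires modification (cf.\ \cite{faunt, gibbs}).
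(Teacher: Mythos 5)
This lemma is not proved in the paper; it is quoted from \cite[Theorem 2.8]{faunt} and \cite[Theorem 6.2]{gibbs}, so there is no in-text argument to compare against. Your sketch is a plausible outline of the shape of the argument in those references, but as written it contains gaps at exactly the two steps where all the content lies.

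The first gap is the claim that, after correcting by a graph automorphism, the induced map on $U/U^{(2)}\cong\mathbb{G}_a^l$ is diagonal. This is simply false for $\Phi$ of type $A_2$: there the extremal automorphism $\varphi_u$ acts on $U/U^{(2)}$ by the non-monomial matrix $\left(\begin{smallmatrix}1&u\\u&1\end{smallmatrix}\right)$, because $\alpha_N-\alpha_1=\alpha_2$ already has height $1$; this is precisely why \thmref{maxunipotent} treats $A_2$ as a separate case. Even in higher-rank types, where the extremal automorphism is trivial modulo $U^{(2)}$, pinning the induced map down to ``Dynkin symmetry times diagonal'' requires proving that compatibility with the pairing $U/U^{(2)}\times U/U^{(2)}\to U^{(2)}/U^{(3)}$ (its incidence pattern of which $\alpha_i+\alpha_j$ lie in $\Phi$, together with the constants $c_{ij}$) rigidly forces a monomial matrix; that is a nontrivial type-by-type check, not something read off from ``preserving Cartan integers.'' The second and larger gap is the sentence asserting that inner conjugation is ``rich enough to realize every desired correction'' at intermediate heights. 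Whether a height-$m$ correction term can be cancelled by conjugating with a height-$(m-1)$ element amounts to the surjectivity of a linear map whose entries are the structure constants in \eqref{commutator}; establishing this surjectivity at every intermediate height for every irreducible type, and identifying exactly where it fails at the top (which is what produces the extremal and central families), is the bulk of the work in both cited papers, and a one-line appeal to the commutator formula being ``rich enough'' does not discharge it. A smaller issue: the identification of the height filtration with the lower central series (hence its being characteristic) also depends on the constants $c_{11}$ being units of $k$, so the hypothesis $\mathrm{char}\,k\neq 2,3$ is already needed there, not only for the denominators of $\lambda_i,\mu_i,\nu_i$.
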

 Let $H$ be the subgroup of $\mathrm{Aut}_{\mathrm{alg}}(U)$, generated by the extremal, central, diagonal and graph automorphisms of $U$. If $\psi\in \mathrm{Aut}_{\mathrm{alg}}(U)$ and $\mathrm{Int}_g\psi\in H$ for some $g\in U$, then by virtue of Lemma \ref{inner}, $R(\psi)=1$ if and only if $R(\mathrm{Int}_g\psi)=1$. So, consider any automorphism $\varphi$ of $U$ of the form $\varphi=\varphi_{\rho} \varphi_{\chi} \varphi_{\omega}\varphi_C\in H$. For now let us assume that $\Phi$ is \textbf{not} of type $A_2$. We associate a matrix $M(U,\varphi)$ to the pair $(U,\varphi)$ in the following way:

For $h\in\mathbb{N}$, let $\beta_1<\beta_2<\cdots<\beta_n$ be all positive roots of height $h$. Then note that $\rho$ stabilizes the subset $\{\beta_1,\ldots,\beta_n\}$ of $\Phi^+$. Since $\rho$ is determined by a permutation of $\{1,\ldots,n\}$, we denote this permutation also by $\rho$  and note that $\rho(\beta_i)=\beta_{\rho(i)}$ for all $1\leq i\leq n$. Now define the matrix
$M_h(U,\varphi)=\begin{pmatrix}
m_{ij}
\end{pmatrix}$, where the rows are described as follows: 
\begin{enumerate}
\item If $\rho(\beta_i)=\beta_i$, then $m_{ii}=\chi(\beta_i)-1$  and $m_{ij}=0$ for all $j\neq i$.
\item If $\rho(i)\neq i$, then $m_{ii}=-1$, $m_{i\rho^{-1}(i)}=\chi(\beta_{\rho^{-1}(i)})$ and $m_{ij}=0$ for all $j\neq i,\rho^{-1}(i)$.
\end{enumerate}

\vspace*{3mm}

\noindent So, if $1=h_1<h_2<\cdots<h_r$ are all possible heights of the elements of $\Phi^+$, then define the following block diagonal matrix: 

$$M(U,\varphi):=\begin{pmatrix}
\boxed{M_{h_1}(U,\varphi)}&0&0&\hdots&0\\

0&\boxed{M_{h_2}(U,\varphi)}&0&\hdots&0\\
\vdots&\vdots&\ddots&&\vdots\\
0&0&0&\hdots&\boxed{M_{h_r}(U,\varphi)}
\end{pmatrix}.$$

\noindent As an illustration, let us compute the matrix $M(U,\varphi)$ for some particular cases: 
\begin{example}
     If $\rho=1$, 
     then we have 
$$M(U,\varphi)=\mathrm{diag}(
\chi(\alpha_1)-1,\cdots ,\chi(\alpha_N)-1).$$
\end{example}

\begin{example}
Let $\Phi$ be the root system of type $D_4$. Let $\Delta=\{\alpha_1,\alpha_2,\alpha_3,\alpha_4\}$. Note that  $|\Phi^+|=12=N$ and  $\alpha_N=\alpha_1+2\alpha_2+\alpha_3+\alpha_4$ is the unique root of maximum height. Now fix an ordering of the positive roots: $\alpha_1<\alpha_2<\cdots<\alpha_{12}$. Here $h_i=i$ for $1\leq i\leq 5$. Suppose that $\rho(\alpha_1)=\alpha_3,  \rho(\alpha_3)=\alpha_4, \rho(\alpha_4)=\alpha_1$ and $\rho(\alpha_2)=\alpha_2$. Then we get 
$$M(U,\varphi)=\mathrm{diag}(M_1(U,\varphi),M_2(U,\varphi),M_3(U,\varphi),M_4(U,\varphi),M_5(U,\varphi)),$$
where
\begin{enumerate}
\item[]  $M_1(U,\varphi)=\begin{pmatrix}-1&0&0&\chi(\alpha_4)\\
0&\chi(\alpha_2)-1&0&0\\\chi(\alpha_1)&0&-1&0\\0&0&\chi(\alpha_3)&-1\end{pmatrix}$, 

\item[] $M_2(U,\varphi)=\begin{pmatrix}
-1&0&\chi(\alpha_7)\\\chi(\alpha_5)&-1&0\\0&\chi(\alpha_6)&-1
\end{pmatrix},$

\item[] $M_3(U,\varphi)=\begin{pmatrix}
-1&\chi(\alpha_9)&0\\0&-1&\chi(\alpha_{10})\\\chi(\alpha_8)&0&-1
\end{pmatrix},$

\item[] $M_4(U,\varphi)=(\chi(\alpha_{11})-1),$ 
\item[] $ M_5(U,\varphi)=(\chi(\alpha_{12})-1)$.
\end{enumerate}

\end{example}

Again, let $1=h_1<h_2<\cdots<h_r$ be all possible heights of the elements of $\Phi^+$ and consider the sequence of subgroups $U=U_{h_1}>\cdots > U_{h_r}$, where $U_{h_i}=\{\prod\limits_{\alpha\in \Phi^+}x_\alpha(t):t\in k ~\mathrm{and} ~\mathrm{ht}(\alpha)<h_i\Rightarrow t=0 \}$. It is clear that for any $\varphi\in \mathrm{Aut}_{\mathrm{alg}}(U)$, $\varphi(U_{h_i})=U_{h_i}$. Let $\varphi_i:=\varphi|_{U_{h_i}}$ ($1\leq i\leq r$), $\overline{\varphi_i}$ the automorphism of $U_{h_i}/U_{h_{i+1}} $ induced by $\varphi_i$ ($1\leq i\leq r-1$) and set $\overline{\varphi_r}:=\varphi_r$. Then by virtue of Theorem \ref{general}, we have

\begin{lemma}\label{generalunip}
$R(\varphi)=1$ if and only if $R(\overline{\varphi_i})=1$ for all $1\leq i\leq r$.

\end{lemma}
With the above preparation we proceed to prove the following:

\begin{theorem}\label{maxunipotent}
Let $U$ be a maximal unipotent subgroup of a simple algebraic group $G$  and consider any automorphism $\psi\in \mathrm{Aut}_{\mathrm{alg}}(U)$. Let $y\in U$ be such that $\psi\mathrm{Int}_y=\varphi=\varphi_{\rho} \varphi_{\chi} \varphi_{\omega}\varphi_C$. Also assume that the root system $\Phi$ of $G$ is  not of type $A_2$.  Then the following are equivalent:
\begin{enumerate}
\item  $R(\psi)=1$.
\item  $R(\varphi)=1$.
\item The matrix $M(U,\varphi)$ is invertible.
\end{enumerate}
\end{theorem}

\begin{proof}
\emph{(1) $\Leftrightarrow$ (2)} This is a consequence of Lemma \ref{inner}, since $\psi$ and $\varphi$ differ by an inner conjugation.
\vspace*{2mm}

\emph{(2) $\Leftrightarrow$ (3)} In view of Lemma \ref{generalunip}, it suffices to show that $R(\overline{\varphi_i})=1$ if and only if $M_{h_i}(U,\varphi)$ is invertible for all $1\leq i\leq r$. To see this, first observe that $U_{h_i}/U_{h_{i+1}}\cong \mathbb{G}_a^{l_i}$, where $l_i$ is the number of positive roots of height $h_i$. Therefore, it only remains to be shown that the automorphism $\overline{\varphi_i}$ is given by the matrix $(M_{h_i}(U,\varphi)+\mathrm{Id})$ (c.f. Example \ref{ex2}) and this is achieved via the following computations:
 
 Let $x=x_{\alpha_1}(t_1)\cdots x_{\alpha_N}(t_N)$ ($t_i\in k$) be an arbitrary element in $U$. Consider the following cases:
\begin{enumerate}
\item\label{al}\emph{$\Phi$ is of type $A_l\;(l\geq 3):$}
\begin{align*}
&\varphi\left(\prod_{j=1}^Nx_{\alpha_j}(t_j)\right)=\varphi_{\rho} \varphi_{\chi} \varphi_{\omega}\varphi_C\left(\prod_{j=1}^Nx_{\alpha_j}(t_j)\right)\\
=&\left(\prod\limits_{j=1}^{N-3}x_{\rho(\alpha_j)}(\chi(\alpha_j)t_j)\right)
x_{\alpha_{N-2}}(\chi(\alpha_{N-1})(t_{N-1}+ut_1))\\ &x_{\alpha_{N-1}}(\chi(\alpha_{N-2})(t_{N-2}+u't_l))x_{\alpha_N}(\chi(\alpha_{N})(t_N+\sum_{j=1}^lg_j(t_j)+u\lambda_1 t_1^2+u'\lambda_l t_l^2)).
\end{align*} 
\item\label{cl}\emph{$\Phi$ is of type $C_l:$}
\begin{align*}
&\varphi\left(\prod_{j=1}^Nx_{\alpha_j}(t_j)\right)=\varphi_{\chi} \varphi_{\omega}\varphi_C\left(\prod_{j=1}^Nx_{\alpha_j}(t_j)\right)\\
=&\left(\prod_{j=1}^{N-3}x_{\alpha_j}(\chi(\alpha_j)t_j)\right) x_{\alpha_{N-2}}(\chi(\alpha_{\alpha_{N-2}})(t_{N-2}+u't_i))\\
&x_{\alpha_{N-1}}(\chi(\alpha_{N-1})(t_{N-1}+ut_i+u'\mu_i t_i^2) x_{\alpha_N}(\chi(\alpha_{N})(t_N+\sum_{j=1}^lg_j(t_j)+u\lambda_it_i^2+u'\nu_i t_i^3)).
\end{align*}
\item\label{alcl}\emph{$\Phi$ is not of type $A_l$ or $C_l:$}
\begin{align*}
&\varphi\left(\prod_{j=1}^Nx_{\alpha_j}(t_j)\right)=\varphi_{\rho} \varphi_{\chi} \varphi_{\omega}\varphi_C\left(\prod_{j=1}^Nx_{\alpha_j}(t_j)\right)=\left(\prod\limits_{j=1}^{N-2}x_{\rho(\alpha_j)}(\chi(\alpha_j)t_j)\right)\\
 &x_{\alpha_{N-1}}(\chi(\alpha_{N-1})(t_{N-1}+ut_i))x_{\alpha_N}(\chi(\alpha_{N})(t_N+\sum_{j=1}^lg_j (t_j)+u\lambda_i t_i^2)).
\end{align*}
\end{enumerate}

So, if we start with an arbitrary element in $U_{h_i}$, and read the equations in \eqref{al},\eqref{cl} and \eqref{alcl} modulo $U_{h_{i+1}}$, then it is clear that $\overline{\varphi_i}$ is described by the matrix $M_{h_i}(U,\varphi)+\mathrm{Id}$ ($1\leq i\leq r-1$). For $i=r$, note that $\varphi_r(x_{\alpha_N}(t))=x_{\alpha_N}(\chi(\alpha_N)t)$, for all $t\in k$, thereby showing that $\overline{\varphi_r}(=\varphi_r)$ is given  by multiplication by $\chi(\alpha_N)$. This completes the proof.
\end{proof}

As an immediate consequence of the above theorem, we record the following

\begin{corollary}
 (a) If $\varphi_\chi=\mathrm{Id}$, then $R(\varphi)=\infty$.
 
 \noindent (b) For any extremal automorphism $\varphi_\omega$, we have $R(\varphi_\omega)=\infty$.
 
 \noindent
 (c) If $\varphi_\rho=\mathrm{Id}$, then $R(\varphi)=\infty$ if and only if $\chi(\alpha_i)=1$ for some $i=1,\ldots,N$.
\end{corollary}
\begin{proof}
 Since the last block $M_{h_r}(U,\varphi)$ appearing in the matrix $M(U,\varphi)$ is a $1\times 1$ matrix given by $(\chi(\alpha_N)-1)$, (a) follows. Part (b) follows from the fact that an extremal automorphism acts trivially on the root subgroup $U_{h_r}=\{x_{\alpha_N}(t): t\in k\}$. For part (c), it suffices to observe that $M(U,\varphi)$ in this case, is equal to $\mathrm{diag}(\chi(\alpha_1)-1,\ldots,\chi(\alpha_N)-1)$.
\end{proof}
\begin{remark}
The reason for not including the root system of type $A_2$ in Theorem \ref{maxunipotent} is that the matrix $M(U,\varphi)$ looks quite different in general. We compute this matrix for a particular case in Example \eqref{a2} below.
\end{remark}

\subsection{Examples}\label{ex}

\begin{enumerate}

\item Tori do not have the algebraic $R_\infty$-property (c.f. Theorem \ref{torus}).

\vspace*{2mm}

\item It follows from Theorem \ref{solv} that  a connected nilpotent algebraic group $G$ has the algebraic $R_\infty$-property if and only if its unipotent radical has the algebraic $R_\infty$-property.

\vspace*{2mm}

\item\label{ex2} Let $G=\mathbb{G}_a^n$. We write each element of $G$ as an $n\times 1$ column vector in $k^n$. Then $\mathrm{GL}_n(k)$ is naturally identified with a subgroup of $\mathrm{Aut}_{\mathrm{alg}}(G)$ via $A\mapsto \varphi_A$ for all $A\in \mathrm{GL}_n(k)$, where $\varphi_A$ is the automorphism of $G$ given by $\overline{x}\mapsto A\overline{x}$, for all $\overline{x}\in G$.
 One checks that for any $\varphi\in \mathrm{GL}_n(k)\subset \mathrm{Aut}_{\mathrm{alg}}(G)$, $R(\varphi)=1$ if and  only if $\det(\varphi-\mathrm{Id})\neq 0$. 

\vspace*{2mm}

\item\label{ex3} For $n\geq 1$ and $r\geq 2$, let $\theta_1,\theta_2$ be homomorphisms of $\mathbb{G}_m^n\rightarrow \mathrm{GL}_r(k)$ defined by \begin{align*}
\theta_1(t_1, \ldots, t_n)&=\begin{pmatrix}
\boxed{\begin{matrix}t_1&0\\
0&t_1^{-1}\end{matrix}}&0\\
0&\boxed{I_{r-2}}
\end{pmatrix},\\  
\theta_2(t_1,\ldots, t_n)&=\begin{pmatrix}
t_1&0\\
0&\boxed{I_{r-1}}
\end{pmatrix},
\end{align*} for all $t_i\in \mathbb{G}_m$. Identifying $\mathrm{GL}_r(k)$ with a subgroup of $\mathrm{Aut}_{\mathrm{alg}}(\mathbb{G}_a^r)$, we consider the semidirect products $G_i=\mathbb{G}_m^n\ltimes_{\theta_i} \mathbb{G}_a^r$ ($i=1,2$).

\vspace*{2mm}

\noindent\textbf{(a)} First, we observe that $G_1$ does not have the algebraic $R_\infty$-property. Fix $a,b\in k^\times$, $B\in \mathrm{GL}_{r-2}(k)$ such that $ab\neq 1$ and $(B-I_{r-2})\in \mathrm{GL}_{r-2}(k)$.
Let $\varphi_1:\mathbb{G}_m^n\rightarrow\mathbb{G}_m^n$ and $\varphi_2:\mathbb{G}_a^r\rightarrow\mathbb{G}_a^r$ be the automorphisms given by $\varphi_1(t_1,\ldots,t_n)=(t_1^{-1},\ldots,t_n^{-1})$ and $\varphi_2(\overline{x})=\begin{pmatrix}
\boxed{\begin{matrix}0&a\\
b&0\\\end{matrix}}&0\\
0&\boxed{B}
\end{pmatrix}\overline{x}$, for all $t_i\in \mathbb{G}_m, \overline{x}\in \mathbb{G}_a^r$. Then by  Theorem \ref{torus} and Example \eqref{ex2} above, $R(\varphi_1)=R(\varphi_2)=1$. A direct calculation shows that $\varphi((t_1,\ldots,t_n),\overline{x}):=(\varphi_1(t_1,\ldots,t_n),\varphi_2(\overline{x}))$, for all $t_i\in \mathbb{G}_m, \overline{x}\in \mathbb{G}_a^r$, defines an automorphism of $G_1$. Therefore by Theorem \ref{solv} (Case 1), we  conclude that $R(\varphi)=1$.

\vspace*{2mm}

\noindent\textbf{(b)} The group  $G_2$ has the  algebraic $R_\infty$-property. Let $\psi\in \mathrm{Aut}_{\mathrm{alg}}(G_2)$ be any automorphism of $G_2$. Then for a suitable $g\in G_2$, the automorphism $\varphi=\mathrm{Int}_g\psi$ maps $\mathbb{G}_m^n$ onto itself. By virtue of Lemma \ref{inner}, it suffices to show that $R(\varphi)=\infty$. This follows from the \textbf{claim} that $R(\varphi|_{\mathbb{G}_m^n})=\infty$ (by Theorem \ref{solv} (Case 3)).  Before we prove the claim let us recall a definition. Let $\mathrm{char}(k)=p$. A \textit{$p$-polynomial in one variable} is defined as a polynomial of the form $f(X)=\sum\limits_{i=0}^na_iX^{p^i}$ for some positive integer $n$ and scalars $a_0,\ldots,a_n\in k$, if $p>0$ (respectively, $f(X)=aX$ for some $a\in k$, if $p=0$). 

\vspace*{2mm}

\noindent\textbf{Proof of claim}: Let $\varphi|_{\mathbb{G}_m^n}=\varphi_1$ and $\varphi|_{\mathbb{G}_a^r}=\varphi_2$. If possible let $R(\varphi_1)=1$. Then by Theorem \ref{torus}, $\varphi_1=(a_{ij})\in \mathrm{GL}_n(\mathbb{Z})$ such that $\mathrm{det}((a_{ij})-\mathrm{Id})\neq 0$; and $\varphi_2$ is given by :
 For every $\overline{x}=(x_1,\ldots,x_r)\in \mathbb{G}_a^r$, $\varphi_2(\overline{x})=\overline{y}$, where the $j^\mathrm{th}$ coordinate of the vector $\overline{y}$ is $y_j=\sum \limits_{i=1}^r f_{ji}(x_i)$, each $f_{ji}$ being a $p$-polynomial in one variable over $k$ (c.f. \cite{rosen}). Now, for every $(t_1,\ldots,t_n)\in \mathbb{G}_m^n$ we have $\varphi_2\theta_2((t_1,\ldots,t_n))=\theta_2(\varphi_1((t_1,\ldots,t_n)))\varphi_2$; evaluating on an arbitrary element $\overline{x}\in \mathbb{G}_a^r$, we obtain
\begin{equation*}
f_{j1}(t_1x_1)=f_{j1}(x_1) ~(2\leq j\leq r),
\end{equation*}
and
\begin{equation*}
\begin{split}
&f_{11}(t_1x_1)+f_{12}(x_2)+\cdots +f_{1r}(x_r)\\
=&~t_1^{a_{11}}t_2^{a_{12}}\cdots t_n^{a_{1n}}(f_{11}(x_1)+\cdots+f_{1r}(x_r)).
\end{split}
\end{equation*}
Since $t_i$'s and $\overline{x}$ are arbitrary and $(a_{ij})$ is invertible, we conclude that $f_{j1}=0=f_{1j}$ (for all $2\leq j\leq r$) and $f_{11}(t_1x_1)=t_1^{a_{11}}t_2^{a_{12}}\cdots t_n^{a_{1n}}f_{11}(x_1)$. This shows that $\varphi_2$ maps the subgroup $\mathbb{G}_a\times 0\times\cdots\times 0$ isomorphically onto itself via $f_{11}$, thereby implying that $f_{11}(X)=cX$ for some $c\in k$ and a variable $X$. Thus we have  $ct_1=ct_1^{a_{11}}t_2^{a_{12}}\cdots t_n^{a_{1n}}$, for all $t_i\in k^\times$. Now suppose that $a_{11}\neq 1$. Then by taking $t_2=\cdots=t_n=1$ and $t_1$ to be such that $t_1^{a_{11}-1}\neq 1$, we note that $c=0$. On the other hand, if $a_{11}=1$, then at least one of $a_{12},\ldots,a_{1n}$ is non-zero; for if it is not the case, then the first row of the matrix $(a_{ij})-\mathrm{Id}$ becomes zero, contrary to the fact that $\mathrm{det}((a_{ij})-\mathrm{Id})\neq 0$. Thus by taking suitable values for $t_2,\ldots, t_n$, we again infer that $c=0$. Hence $f_{11}=0$, a final contradiction. \qed

\textbf{Alternatively}, we can show that $R(\varphi)=\infty$ via the following argument:

 First, note that $\varphi$ stabilizes the commutator subgroup $U:=[\mathbb{G}_m^n,G_2]$. One checks that $U$ is isomorphic to $\mathbb{G}_a$ and $\mathbb{G}_m^n$ acts nontrivially on $U$ via conjugation. Therefore, $\varphi$ restricts to an automorphism (say) $\psi$ of $\mathbb{G}_m^n\ltimes U$. Now if possible let  $R(\varphi)=1$. Then by Theorem \ref{solv}, $R(\varphi|_{\mathbb{G}_m^n})=R(\psi|_{\mathbb{G}_m^n})=1$. But then by Lemma \ref{tga}, $\mathbb{G}_m^n$ centralizes $U$, a contradiction.


\vspace*{2mm}

\item Let $\mathrm{char}(k)=p>0$. The set $k^n$ can be endowed with the structure of a ring with identity via a construction due to Witt. Denote this ring by $(W_n(k),\oplus,\circ,0,1)$, where the roles of $0$ and $1$ are played by the elements $(0,\ldots,0)$ and $(1,\ldots,0)$ respectively (c.f. \cite{jacobson2} for all relevant definitions). The group $G=(W_n(k),\oplus,0)$ is a connected unipotent commutative algebraic group whose underlying affine variety is given by $\mathbb{A}_k^n$. It can be shown that an element $(\lambda_0,\lambda_1,\cdots, \lambda_{n-1})$ ($\lambda_i\in k$) admits a multiplicative inverse in the ring $W_n(k)$ if and only if $\lambda_0\neq 0$. So let $\lambda=(\lambda_0,0,\ldots,0)\in W_n(k)$ with $\lambda_0\neq 0$ and $\lambda_0^{p^i}\neq 1$ (for $0\leq i\leq n-1$). Note that the left homothety $\varphi_\lambda$ (defined by $\lambda$) gives an automorphism of $G$ (c.f. \cite[Lemma 3.3]{proud}). We check that the fixed point subgroup $G^{\varphi_\lambda}$ is trivial. Indeed, for if $x=(x_0,\ldots,x_{n-1})\in G^{\varphi_\lambda}$, then $(x_0,\ldots,x_{n-1})=\varphi_\lambda((x_0,\cdots,x_{n-1}))=(\lambda_0x_0,\lambda_0^px_1,\ldots,\lambda_0^{p^{n-1}}x_{n-1})$ implies that $x_i=0$, for all $0\leq i\leq n-1$. Hence by Lemma \ref{stalg} $R(\varphi_\lambda)=1$.

\item\label{a2} Let $G$ be a simple algebraic group with root system $\Phi$ of type $A_2$. Let $\Phi^+=\{\alpha_1,\alpha_2, \alpha_3=\alpha_1+\alpha_2\}$ and  $U$ be the maximal unipotent subgroup of $G$ generated by $\{x_{\alpha_i}(t_i):t_i\in k,~i=1,2,3\}$. Note that $U=U_1> U_2> 1$ is both the lower and the upper central series for $U$, where $U_2=\langle x_{\alpha_3}(t): t\in k\rangle\cong \mathrm{G}_a$ and  $U/U_2=\langle\overline{x_{\alpha_1}(t)},\overline{x_{\alpha_2}(s)}: t,s\in k\rangle\cong\mathbb{G}_{a}^2$. The group of all extremal automorphisms of $U$ is the subgroup of $\mathrm{Aut}_{\mathrm{alg}}(U)$ generated by the subset $\{\varphi_u,\psi_{u'}: u,u'\in k\}$, where $\varphi_u$ and $\psi_{u'}$ are defined by:
\begin{align*}
\varphi_u(x_{\alpha_1}(t))=x_{\alpha_1}(t)x_{\alpha_2}(ut)x_{\alpha_3}(\lambda_1ut^2);\\\varphi_u(x_{\alpha_j}(t))=x_{\alpha_j}(t),~ (j=2,3),~\mathrm{for}~\mathrm{all}~ t\in k,
\end{align*} and  
\begin{align*}
\psi_{u'}(x_{\alpha_2}(t))=x_{\alpha_2}(t)x_{\alpha_1}(u't)x_{\alpha_3}(\lambda_2u't^2);\\
  \psi_{u'}(x_{\alpha_j}(t))=x_{\alpha_j}(t),~ (j=1,3),~\mathrm{for}~\mathrm{all}~ t\in k,
  \end{align*} for some $\lambda_1,\lambda_2\in k$ (c.f. formula \eqref{exal}). For $u_1,u_1',u_2,u_2'\in k$, consider the extremal automorphism $\varphi_\omega:=\varphi_{u_1}\psi_{u_1'}\varphi_{u_2}\psi_{u_2'}$. Let  $\varphi:=\varphi_{\rho}\varphi_{\chi}\varphi_\omega\varphi_C$, where $\varphi_\rho$ is the graph automorphism induced by $\alpha_1\mapsto \alpha_2; \alpha_2\mapsto \alpha_1$, $\varphi_\chi$ is the diagonal automorphism defined by a character $\chi$, $\varphi_C$ is a central automorphism. Let $\overline{\varphi}$ denote the automorphism of $U/U_2$ induced by $\varphi$.
Now for any $t,s\in k$, we get,
\begin{equation*}
\begin{split}
&\overline{\varphi}(\overline{x_{\alpha_1}(t)})\\
=&~\overline{x_{\alpha_2}(\chi(\alpha_1)(1+u_1'u_2)t)}
\overline{x_{\alpha_1}(\chi(\alpha_2)(u_1+u_2+u_1u_1'u_2)t)}\\
&\mathrm{and,}\\
&\overline{\varphi}(\overline{x_{\alpha_2}(s)})\\
=&~\overline{x_{\alpha_2}(\chi(\alpha_1)(u_1'+u_2'+
	u_1'u_2'u_2)s)}
\overline{x_{\alpha_1}(\chi(\alpha_2)(1+u_1u_1'+u_2u_2'+u_1u_2'+u_1u_1'u_2u_2')s)}.
\end{split}
\end{equation*}

Thus the automorphism $\overline{\varphi}$ (viewed as an automorphism of $\mathbb{G}_a^2$) is given by the matrix 
\[ M:=\begin{pmatrix}\chi(\alpha_2)(u_1+u_2+u_1u_1'u_2)&\chi(\alpha_2)(1+u_1u_1'+u_2u_2'+u_1u_2'+u_1u_1'u_2u_2')\\\chi(\alpha_1)(1+u_1'u_2)&\chi(\alpha_1)(u_1'+u_2'+u_1'u_2'u_2)
\end{pmatrix}.\] Therefore by Lemma \ref{generalunip}, $R(\varphi)=1$ if and only if $R(\overline{\varphi})=1$ and $R(\varphi|_{U_2})=1$. Now $R(\overline{\varphi})=1$ if and only if $\det(M-  \mathrm{Id})\neq 0$, and $R(\varphi|_{U_2})=1$ if and only if $\chi(\alpha_3)\neq 1$. In general, a similar computation can be carried out for an arbitrary automorphism of $U$.
\end{enumerate}

\section{A comment on abstract $R_\infty$-property} \label{abst}

 Let $G$ be a connected semisimple algebraic group over $k$. If $\mathrm{char}(k)>0$, then for any Frobenius  automorphism $\sigma$ of $G$, the group $G^\sigma$ is finite. Since $\sigma$ is a surjective homomorphism of algebraic groups, by Lemma \ref{stalg}, $R(\sigma)=1$. However by Lemma \ref{nonsolv}, we know that for every algebraic group automorphism $\psi$ of $G$, $R(\psi)=\infty$. In \cite[Theorem 4.1]{FN} it has been shown that if $\mathrm{char}(k)=0$ and $\mathrm{tr.deg}_\mathbb{Q}k<\infty$, then for every abstract automorphism $\theta$ of $G$, $R(\theta)=\infty$. On the other hand it is known that if $\mathrm{tr.deg}_\mathbb{Q}k$ is infinite, and $G$  is one of the groups $\mathrm{GL}_n(k)$ \cite[Theorem 7]{timurjaa}, $\mathrm{SO}_n(k)$ and $\mathrm{Sp}_{2n}(k)$ \cite[Corollary 1, Theorem 6]{nas20} ($n\geq 1$), then  there exists an abstract automorphism (say) $\varphi$ of $G$ such that $R(\varphi)=1$. Following a line of argument as in the proof of \cite[Theorem 6]{timurjaa}, we now proceed to deduce an analogue of this result for Borel subgroups of simple algebraic groups.

\begin{lemma}\label{poly}
Let $A$ be a commutative ring with $1$ and $\sigma$ an automorphism of $A$. Suppose that $\begin{pmatrix}
a_{ij}
\end{pmatrix}\in \mathrm{GL}_n(A)$ and let $b_1,\ldots,b_n\in A$. Then $\sigma$ extends to an automorphism $\widetilde{\sigma}$ of $A[X_1,\ldots,X_n]$ such that $\widetilde{\sigma}(X_i)=\sum\limits_{j=1}^na_{ij}X_j+b_i$ for all $1\leq i\leq n$.
\end{lemma}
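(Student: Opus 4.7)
The plan is to build $\widetilde{\sigma}$ via the universal property of the polynomial ring and then establish bijectivity by producing an explicit two-sided inverse. Concretely, since $A[X_1,\ldots,X_n]$ is the free commutative ring on $n$ generators over $A$, a ring endomorphism is uniquely determined by (i) a ring homomorphism $A\to A[X_1,\ldots,X_n]$ prescribing the action on constants, and (ii) a choice of images in $A[X_1,\ldots,X_n]$ for the generators $X_1,\ldots,X_n$. Taking (i) to be $\sigma$ followed by inclusion and (ii) to be $X_i\mapsto\sum_j a_{ij}X_j+b_i$ unambiguously defines the ring endomorphism $\widetilde{\sigma}$ with the desired properties.

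The cleanest way to see that $\widetilde{\sigma}$ is an automorphism is to factor it as $\widetilde{\sigma}=\widetilde{\sigma}_0\circ\psi$, where $\widetilde{\sigma}_0$ is the endomorphism extending $\sigma$ on constants while fixing each $X_i$, and $\psi$ is the unique $A$-algebra endomorphism with $\psi(X_i)=\sum_j a_{ij}X_j+b_i$. One checks on generators that $\widetilde{\sigma}_0\circ\psi$ agrees with $\widetilde{\sigma}$ and then appeals to uniqueness. Now $\widetilde{\sigma}_0$ is manifestly an automorphism with inverse the analogous extension of $\sigma^{-1}$. For $\psi$, invertibility of the matrix $(a_{ij})$ over $A$ lets us form the affine map $X\mapsto (a_{ij})^{-1}X-(a_{ij})^{-1}b$; the associated $A$-algebra endomorphism $\psi'$ satisfies $\psi\circ\psi'(X_i)=X_i=\psi'\circ\psi(X_i)$ by direct matrix computation, and uniqueness in the universal property forces $\psi\circ\psi'=\psi'\circ\psi=\Id$.

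There is no real obstacle here beyond bookkeeping: the only subtlety is keeping track of which factor twists the coefficients and which rearranges the variables, so that the composition genuinely realises the prescribed rule $\widetilde{\sigma}(X_i)=\sum_j a_{ij}X_j+b_i$ while still extending $\sigma$ on $A$. With the decomposition $\widetilde{\sigma}=\widetilde{\sigma}_0\circ\psi$ that distinction is transparent, and the lemma follows.
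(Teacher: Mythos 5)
Your overall strategy — define $\widetilde{\sigma}$ by the universal property and then exhibit it as a composition of two manifestly invertible pieces — is sound, and it differs from the paper, which instead writes down a single explicit two-sided inverse $\rho$ with $\rho|_A=\sigma^{-1}$ and $\rho(X_i)=\sum_j c_{ij}(X_j-\sigma^{-1}(b_j))$, where $(c_{ij})=\bigl(\sigma^{-1}(a_{ij})\bigr)^{-1}$. Your factoring approach has the advantage of cleanly separating the coefficient twist from the affine change of variables, so you never need to apply $\sigma^{-1}$ to the matrix entries.

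However, the composition order as written is wrong. With $\widetilde{\sigma}_0$ the ``coefficient twist'' (fixing each $X_i$, acting by $\sigma$ on $A$) and $\psi$ the $A$-algebra map with $\psi(X_i)=\sum_j a_{ij}X_j+b_i$, the map $\widetilde{\sigma}_0\circ\psi$ sends $X_i\mapsto\widetilde{\sigma}_0\bigl(\sum_j a_{ij}X_j+b_i\bigr)=\sum_j\sigma(a_{ij})X_j+\sigma(b_i)$, which is not the prescribed rule unless $\sigma$ fixes all the $a_{ij}$ and $b_i$. The correct factorization is $\widetilde{\sigma}=\psi\circ\widetilde{\sigma}_0$: then on constants one gets $\psi(\sigma(a))=\sigma(a)$, and on generators $\psi(\widetilde{\sigma}_0(X_i))=\psi(X_i)=\sum_j a_{ij}X_j+b_i$. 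With this fix the rest of your argument goes through: $\widetilde{\sigma}_0$ is invertible by extending $\sigma^{-1}$ similarly, $\psi$ is invertible because $(a_{ij})\in\GL_n(A)$ makes the affine substitution invertible, and the composition of automorphisms is an automorphism. This is exactly the kind of bookkeeping you flagged as the ``only subtlety,'' so it is worth being explicit about which side twists constants and which side moves variables.
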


\begin{proof}
Consider the $A$-algebra structures defined on $A[X_1,\ldots,X_n]$ defined via the homomorphisms $f_1, f_2 (:A\rightarrow A[X_1,\ldots,X_n])$ where $f_1(a)=a$ and $f_2(a)=\sigma (a)$, for all $a\in A$. Then by the universal mapping property, there exists a ring homomorphism $\widetilde{\sigma}: A[X_1,\ldots,X_n]\rightarrow A[X_1,\ldots,X_n]$ such that $\widetilde{\sigma}|_A=\sigma$ and $\widetilde{\sigma}(X_i)=\sum\limits_{j=1}^na_{ij}X_j+b_i$ for all $1\leq i\leq n$. By a similar argument, one obtains a ring homomorphism $\rho$ of $A[X_1,\ldots,X_n]$ to itself such that $\rho|_A=\sigma^{-1}$ and $\rho(X_i)=\sum\limits_{j=1}^nc_{ij}(X_j-\sigma^{-1}(b_j))$ ($1\leq i\leq n$), where $\begin{pmatrix}
c_{ij}
\end{pmatrix}=\begin{pmatrix}
\sigma^{-1}(a_{ij})
\end{pmatrix}^{-1}$. We observe that $\rho$ and $\widetilde{\sigma}$ are inverses of one another and this proves the lemma.
\end{proof}

Let $k$ be an algebraically closed field of countable transcendence degree over $\mathbb{Q}$. Without loss of generality we assume that  $k=\overline{\mathbb{Q}(X_i:i\in \mathbb{N})}$.
Let $G$, $\Phi$ and $\Delta$ be as in Section \ref{prel} with the root system $\Phi$ being assumed to be irreducible (of rank $l$). Let $\Delta=\{\alpha_1,\ldots,\alpha_l\}$ and $\Phi^+=\{\alpha_1,\ldots,\alpha_N\}$. For any subfield $F$ of $k$ let $B(F)$ denote the subgroup of $G$ generated by $\{h_\alpha(t):\alpha\in \Delta,t\in F^\times\}\cup\{x_{\alpha}(s):\alpha\in \Phi^+,s\in F\}$ and set $B(k)=B$. Every automorphism $\psi$ of $k$ induces an abstract automorphism of $\widetilde{\psi}:B\rightarrow B$ such that $\widetilde{\psi}(h_\alpha(t))=h_\alpha(\psi(t))$ and $\widetilde{\psi}(x_\beta(s))=x_\beta(\psi(s))$, for all $\alpha\in \Delta,\beta\in \Phi^+,t\in k^\times,s\in k$. Now, owing to the assumption on $k$ we note that $B$ is countable. So, let $B=\{g_i:i\in \mathbb{N}\}$ be an enumeration of the elements of $B$ with $g_1=e$. 

\begin{lemma}\label{techlemma}
Let $\beta_1,\ldots,\beta_m\in \Phi^+$ and $s_1,\ldots,s_m\in k$. For each $\alpha_j\in \Phi^+$, let $I^m_j=\{j_1,\ldots,j_{r_j},j_{r_j+1},\ldots,j_{r_j+l_j}\}\subset I_m=\{1,\ldots,m\}$ such that 
\begin{enumerate}
\item $\beta_{j_1}=\beta_{j_2}=\cdots=\beta_{j_{r_j}}=\alpha_j$ and  
\item  $\mathrm{ht}(\beta_{j_{r_j+1}}),\ldots,\mathrm{ht}(\beta_{j_{r_j+l_j}})<\mathrm{ht}(\alpha_j)< \mathrm{ht}(\beta_i)$ for all $i\in I_m\setminus I^m_j$. 
\end{enumerate}
Then  $x_{\beta_1}(s_1)\cdots x_{\beta_m}(s_m) =x_{\alpha_1}(t_1)\cdots x_{\alpha_N}(t_N)$, where
$$ t_j=(s_{j_1}+\cdots+s_{j_{r_j}})+F_j(s_{j_{r_j+1}},\ldots,s_{j_{r_j+l_j}}),$$ for some polynomial $($over $k)$ $F_j(X_1,\ldots,X_{l_j})$ vanishing at zero $(1\leq j\leq N)$.
\end{lemma}

\begin{proof}
We induct on $m$. If $m=1$, then the result clearly holds. Assuming that the result is true for $m-1$, we obtain   $x_{\beta_1}(s_1)\cdots x_{\beta_{m-1}}(s_{m-1})= x_{\alpha_1}(c_1)\cdots x_{\alpha_N}(c_N)$, with $c_j=(s_{j_1}+\cdots+s_{j_{r_j}})+P_j(s_{j_{r_j+1}},\ldots,s_{j_{r_j+l_j}})$ for  some polynomial $P_j(X_1,\ldots,X_{l_j})$ with zero constant term and $I^{m-1}_j=\{j_1,\ldots,j_{r_j+l_j}\}\subset I_{m-1}$ satifies conditions $(1)$ and $(2)$ of the lemma ($1\leq j\leq N$). Assume that $\beta_m=\alpha_n$. By applying Equation \eqref{commutator} (Chevalley's commutator formula), let 
\begin{align}
x_{\alpha_i}(c_i)x_{\alpha_n}(s_m)=x_{\alpha_n}(s_m)x_{\alpha_i}(c_i)D_{i}\;, \;(n+1\leq i\leq N),
\end{align}
where $D_i$ is either equal to $1$ or a product of terms of the form $x_{\alpha}(\lambda c_i^ps_m^q)$ for some $\lambda\in k$ and $p,q$ some positive integers and hence, for any such $\alpha$, we note that $\mathrm{ht}(\alpha)>\mathrm{ht}(\alpha_i), \mathrm{ht}(\alpha_n)$ ($n+1\leq i\leq N$). 
Hence,
 \begin{align} x_{\beta_1}(s_1)\cdots x_{\beta_m}(s_m)&= x_{\alpha_1}(c_1)\cdots x_{\alpha_N}(c_N)x_{\alpha_n}(s_m)\\
&=\left(\prod\limits_{i=1}^{n-1}x_{\alpha_i}(c_i)\right)
x_{\alpha_n}(c_n+s_m)\left(\prod\limits_{i=1}^{N-1-n}
(x_{\alpha_{n+i}}(c_{n+i}))D_i\right) x_{\alpha_N}(c_N)
\end{align}
Note that  none of the $D_i$'s contain a factor of the form $x_{\alpha_{n+1}}(s)$. Now apart from $x_{\alpha_{n+2}}(c_{n+2})$, only $D_1$ possibly contains a factor of the form $x_{\alpha_{n+2}}(dc_{n+1}^ps_m^q)$ for some positive integers $p,q$. So again by repeated application of Equation \eqref{commutator}, we have 
\begin{align*}
\left(\prod\limits_{i=1}^{N-1-n}(x_{\alpha_{n+i}}(c_{n+i}))D_i\right)x_{\alpha_N}(c_N)\\
=x_{\alpha_{n+1}}(c_{n+1})x_{\alpha_{n+2}}(c_{n+2}+dc_{n+1}^ps_m^q)E_1D_2 \left(\prod\limits_{i=3}^{N-1-n}(x_{\alpha_{n+i}}(c_{n+i}))D_i\right)x_{\alpha_N}(c_N),
\end{align*} 
where $E_1$ is obtained from repeatedly applying Equation \eqref{commutator} in order to move $x_{\alpha_{n+2}}(c_{n+2})$ past the factors appearing in $D_1$ until it appears adjacent to $x_{\alpha_{n+2}}(dc_{n+1}^ps_m^q)$. We repeat the above argument with $\alpha_{n+3}$ and get 
\begin{align*}
E_1D_2 \left(\prod\limits_{i=3}^{N-1-n}(x_{\alpha_{n+i}}(c_{n+i}))D_i\right)x_{\alpha_N}(c_N)\\=E_1x_{\alpha_{n+3}}(c_{n+3}+d_1c_{n+1}^{p_1}s_m^{q_1})E_2D_3\left(\prod\limits_{i=4}^{N-1-n}(x_{\alpha_{n+i}}(c_{n+i}))D_i\right)x_{\alpha_N}(c_N)\\
=x_{\alpha_{n+3}}(c_{n+3}+d_1c_{n+1}^{p_1}s_m^{q_1}+d_2c_{n+2}^{p_2}c_{n+1}^{p_3}s_m^{q_2})E_1^\prime E_2D_3\left(\prod\limits_{i=4}^{N-1-n}(x_{\alpha_{n+i}}(c_{n+i}))D_i\right)x_{\alpha_N}(c_N)
\end{align*} where $d_1,d_2\in \mathbb{Z}$, $p_i,q_i\in \mathbb{N}$, $E_2$ is obtained due to moving $x_{\alpha_{n+3}}(c_{n+3})$ past the terms in $D_2$ and $E_1^\prime$ is obtained subsequently from $E_1$. 

The above process is repeated with respect to the subsequent roots $\alpha_{n+4},\ldots,\alpha_N$ inductively to finally obtain
\begin{align}
x_{\alpha_1}(c_1)\cdots x_{\alpha_N}(c_N)x_{\alpha_n}(s_m)=x_{\alpha_1}(t_1)\cdots x_{\alpha_N}(t_N),
\end{align} where $t_i=c_i$ for $i=1,\ldots, n-1$, $t_n=c_n+s_m$ and $t_{n+i}=c_{n+i}+G_i(s_m, c_{n+1},\ldots,c_{i_1})$, where $G_i$ is a polynomial with zero constant term and $\mathrm{ht}(\alpha_{i_1})<\mathrm{ht}(\alpha_{n+i})<\mathrm{ht}(\alpha_{i_1+1})$, $i=1,2,\ldots, N-n$. Since $c_1,\ldots,c_N$ were obtained by invoking the induction hypothesis, it is clear that  $t_1,\ldots,t_N$ above, satisfy the conditions of the lemma. This completes the proof.
\end{proof}

\begin{remark} Note that for Lemma \ref{techlemma}, it is not necessary to impose any characteristic restriction on $k$.
\end{remark}

\begin{lemma}\label{l1}
For every $n\in \mathbb{N}$, there exists a pair $(k_n,\varphi_n)$, where $k_n$ is an algebraically closed subfield of $k$ and $\varphi_n$ is an automorphism of $k_n$ such that the following conditions are satisfied:
\begin{enumerate}
\item $k_n\subset k_{n+1}$ and $\varphi_{n+1}|_{k_n}=\varphi_n$.

\item There exists $y_n\in B(k_n)$ such that $y_n^{-1}\widetilde{\varphi_n}(y_n)=g_n$, where $\widetilde{\varphi_n}$ is an automorphism of $B(k_n)$ induced from $\varphi_n$. 
\end{enumerate}
\end{lemma}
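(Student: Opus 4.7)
The plan is to proceed by induction on $n$, mimicking the strategy of \cite[Theorem 6]{timurjaa}. For the base case $n=1$, take $k_1 = \overline{\mathbb{Q}}$, $\varphi_1 = \mathrm{Id}_{k_1}$, and $y_1 = e$: since $g_1 = e$, the conditions hold trivially.

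For the inductive step, assume $(k_n, \varphi_n)$ has been constructed. The first task is to enlarge $k_n$ so as to absorb the Chevalley coordinates of $g_{n+1}$. Writing $g_{n+1} = \prod_{i=1}^l h_{\alpha_i}(a_i) \cdot \prod_{\beta \in \Phi^+} x_\beta(b_\beta)$ in the fixed height-increasing ordering, let $F \subset k$ be the algebraic closure of $k_n(a_i, b_\beta)$, and extend $\varphi_n$ to an automorphism $\varphi'$ of $F$ (send any transcendence basis of $F$ over $k_n$ to itself, then extend across the algebraic closure by the standard extension theorem for algebraically closed fields). Next, parametrize a candidate
\[
y_{n+1} = \prod_{i=1}^l h_{\alpha_i}(t_i) \cdot \prod_{\beta \in \Phi^+} x_\beta(s_\beta),
\]
where $\{t_i, s_\beta\} \subset k$ is chosen algebraically independent over $F$, which is possible since $k$ has countable transcendence degree over $\mathbb{Q}$ while only finitely many new elements have appeared so far.

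Decomposing $B = T \ltimes U$, the equation $y_{n+1}^{-1} \widetilde{\varphi_{n+1}}(y_{n+1}) = g_{n+1}$ splits into a torus equation $\varphi_{n+1}(t_i) = a_i t_i$ (because the $h_{\alpha_i}$'s commute with each other) and a unipotent equation $\prod_\beta x_\beta(\varphi_{n+1}(s_\beta)) = (\tau^{-1} u \tau)\, u'$, where $\tau = \prod_i h_{\alpha_i}(a_i)$, $u = \prod_\beta x_\beta(s_\beta)$ and $u' = \prod_\beta x_\beta(b_\beta)$. Using \eqnref{1.5} one computes $\tau^{-1} u \tau = \prod_\beta x_\beta(c_\beta s_\beta)$ with $c_\beta = \prod_i a_i^{-\langle \beta, \alpha_i \rangle} \in F^{\times}$. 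Expanding the product on the right via the commutator formula \eqnref{commutator} and re-ordering in the fixed height order yields, for each $\beta \in \Phi^+$,
\[
\varphi_{n+1}(s_\beta) = c_\beta s_\beta + b_\beta + L_\beta\bigl(s_{\beta'} : \mathrm{ht}(\beta') < \mathrm{ht}(\beta)\bigr),
\]
where $L_\beta$ is a polynomial over $F$ vanishing at the origin.

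To complete the construction, apply \lemref{poly} height-by-height: at height one, the assignment is affine with invertible diagonal matrix of coefficients $a_i, c_{\alpha_i}$, yielding an extension of $\varphi'$ to an automorphism of $F[t_i, s_{\alpha_i}]$; at each subsequent height one regards the already-processed lower-height variables as elements of the base ring (which absorbs the nonlinear $L_\beta$ into the constant terms of an affine map), and iterates. This produces a ring automorphism of $F[t_i, s_\beta]$ extending $\varphi'$, which in turn extends to the fraction field and then to an algebraic closure inside $k$, producing $\varphi_{n+1}$ on $k_{n+1} := \overline{F(t_i, s_\beta)} \subset k$ with $\varphi_{n+1}|_{k_n} = \varphi_n$ and $y_{n+1} \in B(k_{n+1})$ satisfying the desired equation. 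The main obstacle is verifying the height-triangular structure: one must check that $L_\beta$ depends only on strictly lower-height $s_{\beta'}$, which follows from the combinatorial fact that $[x_{\beta'}(u), x_{\beta''}(v)]$ contributes only to roots of height at least $\mathrm{ht}(\beta') + \mathrm{ht}(\beta'')$; without this, the substitution would involve genuine polynomial coupling among contemporaneous variables and \lemref{poly} could not be applied directly.
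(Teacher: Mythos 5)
Your argument is correct and follows the paper's own proof essentially step for step: absorb the Chevalley coordinates of $g_{n+1}$ into an algebraically closed subfield, adjoin fresh transcendentals for the coordinates of the prospective solution $y_{n+1}$, derive the triangular system of equations from the conjugation formula \eqnref{1.5} together with the reordering in \lemref{techlemma}, extend the field automorphism via \lemref{poly}, and pass to the algebraic closure. The one place where you add genuine clarity is in spelling out that \lemref{poly} must be invoked height-by-height, treating already-processed variables as constants of the base ring so that the nonlinear lower-height contributions are absorbed into the affine constant term; the paper states the final assignment (which is not globally affine-linear) and cites \lemref{poly} without making this iteration explicit, so your more careful phrasing is a faithful and slightly more rigorous rendering of the same argument.
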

\begin{proof}
We induct on $n\in \mathbb{N}$. For $n=1$, set $k_1=\overline{\mathbb{Q}}$, $\varphi_1=\mathrm{Id}|_{\overline{\mathbb{Q}}}$ and $y_1=e$. We construct $(k_2,\varphi_2)$ as follows:

So let $g_2=\prod\limits_{i=1}^lh_{\alpha_i}(a_i)\prod\limits_{i=1}^N x_{\alpha_i}(b_i)\in B$ ($a_i\in k^\times,b_i\in k$). If $a_1$ is algebraic (respectively, transcendental) over $k_1$, then let $K_1:=k_1$ (respectively, $K_1:=\overline{k_1(a_1)}$). Then there exists an automorphism of $K_1$ which extends $\varphi_1$. Repeating this process with the subsequent scalars $a_2,\ldots,a_l,b_1,\ldots,b_N$, we will finally have an algebraically closed field $k_1^\prime$ such that $k_1(a_1,\ldots,a_l,b_1,\ldots,b_N)\subset k_1^\prime\subset k$, and an automorphism  (say) $\psi$ of $k_1^\prime$, such that $\psi|_{k_1}=\varphi_1$. Note that $\mathrm{tr.deg}_{k_1} k_1^\prime$ is at most $l+N$ and hence $\mathrm{tr.deg}_{k_1^\prime}k$ is countable. The latter observation implies that there  exist $l+N$ elements $t_1,\ldots,t_l,s_1,\ldots,s_N\in k$ which are algebraically independent over $k_1^\prime$. Set $E_1=k_1^\prime(t_1,\ldots,t_l,s_1,\ldots,s_N)$ and we intend to show that a candidate for $k_2$ is an algebraic closure $\overline{E_1}$ of $E_1$ in $k$.
Consider the element $x=\prod\limits_{i=1}^lh_{\alpha_i}(t_i)\prod\limits_{i=1}^N x_{\alpha_i}(s_i)\in B$.
 Then by Equation \eqref{1.5} and Lemma \ref{techlemma}, we  have $xg_2=\prod\limits_{i=1}^lh_{\alpha_i}(a_it_i)\prod\limits_{i=1}^N x_{\alpha_i}((\prod_{j=1}^la_j^{-\langle\alpha_i, \alpha_j\rangle}s_i)+b_i+F_i(s_1,\ldots,s_{n_i}))$, where $F_i(X_1,\ldots,X_{n_i})$ is a polynomial with coefficients in $k_1^\prime$ and $F_i$ vanishes at zero and $n_i$ is such that $\mathrm{ht}(\alpha_{n_i})<\mathrm{ht}(\alpha_i)$ ($1\leq i\leq N$).

Since $a_1,\ldots,a_l\in k^\times$, it follows from Lemma \ref{poly} that there exists an automorphism (say) $\psi^\prime$ of $E_1$ such that $\psi^\prime|_{k_1}=\varphi_1$ and $t_1,\ldots,t_l,s_1,\ldots,s_N$ are transformed under $\psi^\prime$ via the following assignments :

\begin{align*}
t_i&\mapsto a_it_i,~ 1\leq i\leq l\\
s_i&\mapsto \prod_{j=1}^la_j^{-\langle\alpha_i, \alpha_j\rangle}s_i+b_i+F_i(s_1,\ldots,s_{n_i}).
\end{align*}

Let $\varphi_2$ be an extension of $\psi^\prime$ to an algebraic closure $\overline{E_1}$ of $E_1$ in $k$.
Then note that $g_2,x\in B(k_2)$ and the automorphism $\widetilde{\varphi_2}$ of $B(k_2)$ is such that $x^{-1}\widetilde{\varphi_2}(x)=g_2$ as desired. 

Now assume that the pairs $(k_1,\varphi_1),\dots,(k_n,\varphi_n)$ have been constructed for some $n\geq 2$. Then we can construct $(k_{n+1},\varphi_{n+1})$ in exactly the same way as $(k_2,\varphi_2)$ was constructed above. This completes the proof.

\end{proof}

After finding a sequence of pairs $\{(k_n,\varphi_n)\}_{n\in \mathbb{N}}$ as in Lemma \ref{l1}, we note that $\bigcup\limits_{n\in \mathbb{N}}k_n=k$. Indeed, for if $a\in k$ is any scalar, then consider the  element $x_{\alpha}(a)\in B$, for some $\alpha\in \Phi^+$. If $g_m=x_\alpha(a)$ for some $m\in \mathbb{N}$, then by construction, $a\in k_m$.

Now for every $a\in k$ fix a positive integer $n_a$ such that $a\in k_{n_a}$. Then the map $\varphi:k\rightarrow k$ defined by $\varphi(a)=\varphi_{n_a}(a)$, for all $a\in k$, defines an automorphism of $k$. Observe that for any $g_n\in B$, $y_n^{-1}\widetilde{\varphi}(y_n)=y_n^{-1}\widetilde{\varphi_n}(y_n)=g_n$ ($y_n$ as in Lemma \ref{l1}). Hence $R(\widetilde{\varphi})=1$. Thus we have proven the following

\begin{theorem}\label{abstbor}

Let $k$ be an algebraically closed field of countable transcendence degree over $\mathbb{Q}$ and $B$ a Borel subgroup of a simple algebraic group over $k$. Then there exists an abstract automorphism of $B$ such that the associated twisted conjugacy action of $B$ on itself is transitive.
\end{theorem}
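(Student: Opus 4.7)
The plan is to deduce the theorem from \lemref{l1} by forming the limit of the tower of pairs $(k_n,\varphi_n)$ produced there, and showing that the resulting abstract automorphism of $k$ induces an automorphism of $B$ for which every element is twisted-conjugate to the identity.

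Since $k$ has countable transcendence degree over $\mathbb{Q}$, the group $B$ is countable, so we may fix an enumeration $B=\{g_n:n\in\mathbb{N}\}$ with $g_1=e$, as in the setup preceding \lemref{l1}. That lemma supplies an ascending chain $k_1\subseteq k_2\subseteq\cdots\subseteq k$ of algebraically closed subfields together with compatible automorphisms $\varphi_n\in\Aut(k_n)$ (satisfying $\varphi_{n+1}|_{k_n}=\varphi_n$) and elements $y_n\in B(k_n)$ with $y_n^{-1}\widetilde{\varphi_n}(y_n)=g_n$, where $\widetilde{\varphi_n}$ is the automorphism of $B(k_n)$ induced by $\varphi_n$.

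The first step is to verify that $\bigcup_{n\in\mathbb{N}}k_n=k$. Given $a\in k$, the element $x_{\alpha_1}(a)\in B$ equals $g_m$ for some $m$, and by construction of $k_m$ we must have $a\in k_m$. Consequently, the function $\varphi\colon k\to k$ defined by $\varphi(a):=\varphi_n(a)$ for any $n$ with $a\in k_n$ is well-defined by the compatibility condition, and is an automorphism of $k$ (its inverse being built analogously from the $\varphi_n^{-1}$). This $\varphi$ induces an abstract automorphism $\widetilde{\varphi}$ of $B$ whose restriction to each $B(k_n)$ equals $\widetilde{\varphi_n}$.

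The conclusion is then immediate: for every $n\in\mathbb{N}$, $y_n^{-1}\widetilde{\varphi}(y_n)=y_n^{-1}\widetilde{\varphi_n}(y_n)=g_n$, so every element of $B$ lies in $[e]_{\widetilde{\varphi}}$ and $R(\widetilde{\varphi})=1$. The substantive difficulty has already been absorbed into \lemref{l1}, where the hypothesis that $k$ has countable transcendence degree is used to create the fresh transcendentals $t_i,s_i$ required to realise $g_{n+1}$ as a twisted conjugate, and \lemref{poly} together with the Chevalley relations \eqnref{1.5} and \lemref{techlemma} combine to show that the intended assignment of these transcendentals under $\varphi_{n+1}$ actually extends $\varphi_n$ to an automorphism of $E_1$. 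The step from \lemref{l1} to the theorem statement is therefore purely formal.
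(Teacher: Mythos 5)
Your proposal is correct and follows essentially the same route the paper takes: you verify $\bigcup_n k_n = k$ via the enumeration (noting $x_{\alpha_1}(a)$ must equal some $g_m$, forcing $a \in k_m$), glue the compatible $\varphi_n$ into an automorphism $\varphi$ of $k$, induce $\widetilde{\varphi}$ on $B$, and conclude $R(\widetilde{\varphi})=1$ from the twisted-conjugacy equations supplied by \lemref{l1}. This matches the paper's argument; no gap.
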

\begin{remark}
 If $k$ is an algebraically closed field of infinite transcendence degree over $\mathbb{Q}$, then $\mathrm{GL}_n(k)$ admits an abstract automorphism $\varphi$ for which $R(\varphi)=1$ \cite[Theorem 7]{timurjaa}. As a first step, the theorem is proven under the assumption that $\mathrm{tr.deg}_\mathbb{Q}k$ is countable \cite[Theorem 6]{timurjaa}. The general case is subsequently argued by invoking the L\"{o}wenheim-Skolem Theorem (c.f. \cite[Theorem 5]{timurjaa}) from model theory. Using similar arguments, it is perhaps possible to show that Theorem \ref{abstbor} (above) holds even if $\mathrm{tr.deg}_\mathbb{Q}k$ is uncountable but we refrain from recording it here since the relevant model theoretic set-up is not entirely clear to us.
 
\end{remark}

\section*{Acknowledgements}

 We thank the reviewers for their careful reading of the manuscript and their valuable comments, which helped in improving the exposition considerably. Thanks are due to a reviewer  of an earlier draft of the manuscript, for observing Theorem \ref{general}, and the alternative argument provided at the end of Example 4(b) in Section \ref{ex}. We also thank Wilberd van der Kallen, Vladimir L. Popov, Anupam Kumar Singh and Maneesh Thakur for their encouraging remarks on this work.

\end{document}